\documentclass[reqno,11pt]{amsart}

\usepackage[T1]{fontenc}
\usepackage{lmodern}
\usepackage{microtype}
\usepackage[letterpaper,hmargin=1.25in,vmargin=0.9in]{geometry}

\usepackage{mathtools}
\usepackage{amssymb}
\usepackage[unicode,hidelinks]{hyperref}
\allowdisplaybreaks[1]
\newcommand{\R}{\mathbb{R}}
\newcommand{\Z}{\mathbb{Z}}
\DeclareMathOperator{\conv}{conv}
\DeclareMathOperator{\lin}{lin}
\DeclareMathOperator{\rank}{rank}

\newcommand{\Dcount}[3]{D_{#1}(#2,#3)}
\newcommand{\Ncount}[2]{N_{#1}(#2)}
\newcommand{\doi}[1]{\url{https://doi.org/#1}}

\theoremstyle{plain}
\newtheorem{theorem}{Theorem}[section]
\newtheorem{proposition}[theorem]{Proposition}
\newtheorem{lemma}[theorem]{Lemma}
\newtheorem{corollary}[theorem]{Corollary}
\theoremstyle{remark}
\newtheorem{remark}[theorem]{Remark}

\numberwithin{equation}{section}

\title[Rank-Two Discrete Monotonicity]{The Discrete Monotonicity Property in Rank Two}

\author{Fei Xue}
\address{School of Mathematical Sciences, Nanjing Normal University, Nanjing 210046, China}
\email{05429@njnu.edu.cn}

\subjclass[2020]{Primary 11H06, 52C05; Secondary 52C07, 05A20}
\keywords{discrete monotonicity, lattice-convex sets, residue classes, finite tori, successive minima}
\date{October 28, 2026}

\hypersetup{
  pdftitle={The Discrete Monotonicity Property in Rank Two},
  pdfauthor={Fei Xue},
  pdfsubject={Discrete monotonicity for subgroups of rank at most two},
  pdfkeywords={discrete monotonicity, lattice-convex sets, residue classes, finite tori, successive minima}
}

\begin{document}

\begin{abstract}
The Betke--Henk--Wills conjecture predicts a bound for lattice-point counts in terms of successive minima. Malikiosis introduced the discrete monotonicity property for normalized residue counts as a sufficient condition for this bound and its extension to nonsymmetric convex bodies. We prove that every compact convex set satisfies this property for lattice subgroups of rank at most two, in arbitrary ambient dimension.
\end{abstract}

\maketitle

\section{Introduction}\label{sec:introduction}

For an $o$-symmetric convex body $C=-C\subseteq\R^d$ and a full-rank lattice $\Lambda$, the $j$th successive minimum $\lambda_j(C,\Lambda)$ is the least $\lambda>0$ for which $\lambda C$ contains $j$ linearly independent vectors of $\Lambda$. For an arbitrary convex body $K\subseteq\R^d$, Malikiosis's extension of the Betke--Henk--Wills conjecture asserts that
\begin{equation}\label{eq:bhw}
    \#(K\cap\Lambda) \leq\prod_{j=1}^{d} \left(\left\lfloor \frac{2}{\lambda_j\bigl((K-K)/2,\Lambda\bigr)} \right\rfloor+1\right).
\end{equation}
For $K=-K$, this is the original conjecture of Betke, Henk, and Wills \cite[Conjecture~2.1]{BetkeHenkWills1993}, who proved it in dimension two \cite[Theorem~2.2]{BetkeHenkWills1993}. Malikiosis proved \eqref{eq:bhw} in dimensions at most three \cite[Corollary~4.1.1]{Malikiosis2012}. Tointon \cite{Tointon2024} obtained improved bounds in general dimension.

Malikiosis introduced a discrete monotonicity property that compares normalized residue counts at different moduli. His criterion for \eqref{eq:bhw} uses only subgroups of rank below the ambient dimension \cite[proof of Theorem~4.2.1]{Malikiosis2012}, so our rank-two result recovers the known three-dimensional bound. For a compact convex set $K\subseteq\R^d$, a lattice $\Lambda\subseteq\R^d$, and a subgroup $\Gamma\leq\Lambda$, write
\begin{equation}\label{eq:D-definition}
    \Dcount{\Lambda}{K}{r\Gamma} :=\#\{x+r\Gamma:x\in K\cap\Lambda\} \qquad(r\in\Z_{>0}).
\end{equation}
Thus $\Dcount{\Lambda}{K}{r\Gamma}$ counts the cosets of $r\Gamma$ containing a lattice point of $K$. The set $K$ has the \emph{discrete monotonicity property} with respect to $\Lambda$ if
\begin{equation}\label{eq:DMP}
    r\longmapsto \frac{\Dcount{\Lambda}{K}{r\Gamma}}{r^{\rank\Gamma}}
\end{equation}
is nonincreasing on the positive integers for every subgroup $\Gamma\leq\Lambda$ \cite[Definition~4.2.1]{Malikiosis2012}. This is a discrete counterpart of the quotient-volume monotonicity in classical proofs of Minkowski's second theorem \cite{BambahWoodsZassenhaus1965}.

\begin{theorem}[Rank-two discrete monotonicity]\label{thm:main}
Let $K\subseteq\R^d$ be compact and convex, let $\Lambda\subseteq\R^d$ be a lattice, and let $\Gamma\leq\Lambda$ have rank $i\leq2$. Then
\begin{equation}\label{eq:main}
    \frac{\Dcount{\Lambda}{K}{r\Gamma}}{r^i} \geq \frac{\Dcount{\Lambda}{K}{s\Gamma}}{s^i}
\end{equation}
for all positive integers $r\leq s$.
\end{theorem}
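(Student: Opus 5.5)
We reduce first to a statement about finite sets in $\Z^i$ and then argue by induction on $i$. The quantity $\Dcount{\Lambda}{K}{r\Gamma}$ only depends on the image of $K\cap\Lambda$ in $\Lambda/r\Gamma$. Choose a basis of $\Lambda$ adapted to the saturation $\Gamma'=(\Gamma\otimes\mathbb{Q})\cap\Lambda$. Two points of $K\cap\Lambda$ lie in the same coset of $r\Gamma$ only if they lie in the same coset of $\Gamma'$. Hence $\Dcount{\Lambda}{K}{r\Gamma}$ is a sum over the finitely many cosets $y+\Gamma'$ meeting $K$ of the counts for the fibres $K\cap(y+\lin\Gamma)$. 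Each fibre is a compact convex set in an $i$-dimensional affine space, and the ratio \eqref{eq:main} is preserved under summation. It therefore suffices to treat $d=i$ and $\Lambda=\Gamma'\cong\Z^i$ with $\Gamma$ of finite index. We may further change basis so that $\Gamma=a_1\Z\oplus a_2\Z$ (Smith normal form), at the cost of replacing $K$ by a linear image, which is still convex. Thus we must show that, for a lattice-convex set $P=K\cap\Z^i$, the number of residues of $P$ modulo $r\Gamma$, divided by $r^i$, is nonincreasing in $r$.

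\textbf{Rank one.} Here $P$ is a set of consecutive integers of length $n$ and $\Gamma=a\Z$. The number of residues mod $ra$ is $\min(n,ra)$, and $\min(n,ra)/r=\min(n/r,a)$ is nonincreasing. This also settles the degenerate fibres in rank two.

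\textbf{Rank two.} For $\Gamma=a_1\Z\oplus a_2\Z$, we slice $P$ along the first coordinate. For each residue $c$ mod $ra_1$, the columns $P_x=\{y:(x,y)\in P\}$ with $x\equiv c$ are integer intervals of lengths $\ell_x$. The residues mod $ra_2$ of their union number $\min\bigl(ra_2,\ \lvert\bigcup_x (P_x \bmod ra_2)\rvert\bigr)$. So $\Dcount{}{}{}$ becomes a sum over $c$ of the sizes of unions of arcs on a cycle of length $ra_2$. By convexity, along an arithmetic progression $x=c,c+ra_1,\dots$ the intervals $P_x$ have concave upper endpoints and convex lower endpoints. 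Their lengths are concave, up to the usual $\pm1$ rounding of lattice-convex sets.

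The plan is to compare modulus $r$ with modulus $s$ through a fractional covering or averaging argument. The goal is the inequality $s^2 D(r)\ge r^2 D(s)$. Writing $f_r(c)$ for the union-size in class $c$, we want to express each class mod $sa_1$ as a refinement of classes mod $ra_1$ in an averaged sense, using the concavity of the lengths. An equivalent formulation is a torus statement: the image of $P$ in $\Z^2/r\Gamma$ has density at least the density of its image in $\Z^2/s\Gamma$. First we would prove it for $s=r+1$, or for $r\mid s$. The divisibility case is easy, since each residue mod $r\Gamma$ lifts to at most $(s/r)^2$ residues mod $s\Gamma$. For general $r<s$ we would reduce to the case where the image mod $s\Gamma$ is not everything. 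We would then use the structure of lattice-convex sets: either $P$ has lattice width small relative to $r$, in which case the count reduces essentially to rank one, or $P$ is wide in both directions, in which case the image mod $r\Gamma$ is already the whole torus and the left side equals $a_1a_2$, which dominates.

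\textbf{Main obstacle.} The hard step is the intermediate regime, where $P$ wraps around the $r$-torus in one direction but not the other, or wraps partially near its thin ends. For this case I would first apply a unimodular reduction making $P$ reduced, with its width direction along a coordinate axis. I would then bound the union of arcs by the sum of column lengths, capped at $ra_2$. Finally, I would use the concave profile of column lengths to show that the capped sums, normalised by $r^2$, decrease. The key step is a one-dimensional inequality: for a concave sequence $\ell_x$, the function $\sum_{c \bmod ra_1}\min\bigl(ra_2,\sum_{x\equiv c}\ell_x\bigr)/r^2$ is nonincreasing. I expect proving this cleanly, including the rounding defects, to be the crux.
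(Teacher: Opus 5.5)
There is a genuine gap. Your reductions are sound, and the rank-one case is correct. Decomposing by cosets of $\Gamma$ itself rather than of its saturation would make $\Gamma$ the full ambient lattice and remove the $a_1,a_2$ bookkeeping. The problem is the rank-two case, which is the whole content of the theorem. You reduce it to a ``key one-dimensional inequality'' that you do not prove, and that inequality would not imply the theorem even if it were true. The union of the arcs $P_x \bmod ra_2$ over a class $x\equiv c$ is at most $\min(ra_2,\sum_{x\equiv c}\ell_x)$, so your capped sums $F(r)$ are an \emph{upper} bound for $D(r)$. The target $s^2D(r)\ge r^2D(s)$ needs a \emph{lower} bound for $D(r)$. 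Knowing $F(r)/r^2\ge F(s)/s^2$ gives no comparison between $D(r)$ and $D(s)$ unless $F(r)=D(r)$.

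That equality fails exactly in your intermediate regime. Take $\Gamma=\Z^2$, $r=10$, and the lattice points of $\conv\{(0,0),(8,8),(16,0),(8,-8)\}$, which has lattice width $16$, attained in the $x$-direction. The columns $x=3$ and $x=13$ both carry $\{-3,\dots,3\}$, so that class contributes $7$, not $10$. Altogether $\Ncount{10}{P}=95<100=F(10)$. The same example shows that lattice width at least $r$ does not force the image to fill the torus. Your dichotomy would need a threshold nearer $2r$, which leaves a large middle range that your argument does not cover.

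The missing idea is a lower bound for $D(r)$ in terms of what happens at the next modulus. The paper compares only $r$ and $s=r+1$ and chains these comparisons.
\begin{itemize}
\item It first deletes vertices, using B\'ar\'any--F\"uredi longest chords, until $A$ is injective modulo $sL$, without changing $\Ncount{s}{A}$.
\item The Ambro--Ito lattice box then places each fibre modulo $rL$ among the four corners of an $r$-parallelogram. Hence the defect $t=\lvert A\rvert-\Ncount{r}{A}$ is carried by exactly $t$ segments of lattice length $r$ inside $P$.
\item After excluding configurations that force $\Ncount{r}{A}=r^2$, the images of these segments modulo $rL$ are distinct torus lines in at most three pairwise-unimodular directions.
\item These lines extend to a cover of $(\Z/r\Z)^2$ by $2r+1$ lines. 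A counting lemma for subfamilies of such a cover gives $\Ncount{r}{A}\ge r^2t/(2r+1)$, which is the required inequality.
\end{itemize}
The step from modulus $2$ to modulus $3$ is handled separately, using maximal lattice-free convex sets. Nothing in your column-profile approach supplies such a lower bound on $D(r)$.
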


By the reductions and initial cases in Section~\ref{sec:reductions}, it suffices to compare moduli $r$ and $r+1$ for planar lattice-convex sets. We first choose a lattice-convex subset containing exactly one point from each represented class at the larger modulus (Lemma~\ref{lem:terminal-peeling}). A suitable lattice basis then places the points of each class at the smaller modulus among four corners of a lattice parallelogram (Corollary~\ref{cor:four-block}). Unless all classes at the smaller modulus are represented, we select lattice segments whose number equals the difference between the residue counts at moduli $r+1$ and $r$. Their residue images are distinct lines in at most three directions (Section~\ref{sec:lattice-chords}). By Proposition~\ref{prop:completion}, these lines are contained in a cover of the finite torus with $2r+1=(r+1)^2-r^2$ lines. Lemma~\ref{lem:critical-cover-expansion} bounds the number of points covered by the selected lines, proving the comparison from $r$ to $r+1$.
\section{Preliminaries}\label{sec:reductions}

Throughout, a lattice has full rank in its real span. For a finite subset $A$ of a lattice $L$, put
\begin{equation}\label{eq:N-definition}
    \Ncount{r}{A;L}:=\#\{a+rL:a\in A\} \qquad(r\in\Z_{>0}).
\end{equation}
This is the number of distinct cosets of $rL$ meeting $A$. We omit $L$ when the ambient lattice is fixed. We call $A$ \emph{lattice-convex} if $A=\conv(A)\cap L$. If $A=K\cap L$ is nonempty and $P=\conv(A)$, then $P\subseteq K$ and $P\cap L=A$. Thus residue counts for compact convex sets reduce to those for finite lattice-convex sets. If $A=\varnothing$, all residue counts are zero.

A nonzero vector of $L$ is \emph{primitive} if it is not an integer multiple $kv$ with $v\in L$ and $k\geq2$.

\begin{lemma}[Affine-coset reduction]\label{lem:affine-cosets}
Let $K\subseteq\R^d$ be compact and convex, let $\Lambda\subseteq\R^d$ be a lattice, and let $\Gamma\leq\Lambda$. For each coset $\xi\in\Lambda/\Gamma$, choose $z_\xi\in\xi$ and put
$$A_\xi:=(K\cap\xi)-z_\xi\subseteq\Gamma.$$
Every nonempty $A_\xi$ is lattice-convex in $\Gamma$, only finitely many are nonempty, and
\begin{equation}\label{eq:affine-coset-sum}
    \Dcount{\Lambda}{K}{r\Gamma} =\sum_{\xi\in\Lambda/\Gamma}\Ncount{r}{A_\xi;\Gamma} \qquad(r\in\Z_{>0}).
\end{equation}
\end{lemma}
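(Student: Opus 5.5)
The plan is to check the three assertions of the lemma directly from the definitions. We do this by partitioning $K\cap\Lambda$ according to the cosets of $\Gamma$ in $\Lambda$.

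\emph{Finiteness.} Since $K$ is compact and $\Lambda$ is discrete, $K\cap\Lambda$ is finite. Hence only finitely many cosets $\xi$ meet $K$, so only finitely many $A_\xi$ are nonempty, and each of them is finite.

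\emph{Lattice-convexity.} Fix $\xi$ and put $V=\lin\Gamma$. Then $\xi=z_\xi+\Gamma\subseteq z_\xi+V$. Because $\Gamma$ is discrete and spans $V$, we have $(z_\xi+V)\cap\Lambda\supseteq\xi$. The one point that needs care is that $(z_\xi+V)\cap\Lambda$ may be strictly larger than $\xi$ when $\Gamma$ is not saturated. So we argue directly with $\Gamma$ instead of with this intersection.

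Let $A=A_\xi$ be nonempty and let $y\in\conv(A)\cap\Gamma$. Then $y+z_\xi\in\conv(K\cap\xi)\subseteq K$, since $K$ is convex. Also $y+z_\xi\in z_\xi+\Gamma=\xi$. Therefore $y+z_\xi\in K\cap\xi$, which gives $y\in A$. This shows $\conv(A)\cap\Gamma\subseteq A$. The reverse inclusion is trivial, so $A$ is lattice-convex in $\Gamma$. We are using the definition of lattice-convexity from the preliminaries with ambient lattice $L=\Gamma$; since $A\subseteq\Gamma$ is finite, this is exactly what is required.

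\emph{The counting identity.} Since $r\Gamma\le\Gamma$, each coset $x+r\Gamma$ with $x\in K\cap\Lambda$ lies inside the single $\Gamma$-coset $x+\Gamma$. So the set of cosets counted by $\Dcount{\Lambda}{K}{r\Gamma}$ is the disjoint union, over $\xi\in\Lambda/\Gamma$, of the sets
\[
\{x+r\Gamma:x\in K\cap\xi\}.
\]
Translation by $-z_\xi$ maps $\Lambda/r\Gamma$ bijectively onto itself. It sends the cosets $x+r\Gamma$ with $x\in K\cap\xi$ onto the cosets $a+r\Gamma$ with $a\in A_\xi$. These are cosets of $r\Gamma$ inside $\Gamma$, and their number is $\Ncount{r}{A_\xi;\Gamma}$. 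Summing over $\xi$ gives \eqref{eq:affine-coset-sum}.

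No step is a real obstacle. The only subtlety is the point noted above: lattice-convexity has to be checked relative to $\Gamma$, not relative to $\Lambda\cap\lin\Gamma$, and convexity of $K$ handles this directly.
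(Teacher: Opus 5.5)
Your proof is correct and follows essentially the same route as the paper. Finiteness comes from compactness of $K$ and discreteness of $\Lambda$; lattice-convexity in $\Gamma$ comes from $\conv(A_\xi)+z_\xi\subseteq K$ together with $y+z_\xi\in\xi$; and the identity comes from each coset of $r\Gamma$ lying in a single $\Gamma$-coset, followed by translation by $-z_\xi$. Your aside about $(z_\xi+V)\cap\Lambda$ is harmless but unnecessary, since the argument never uses it.
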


\begin{proof}
Congruence modulo $r\Gamma$ never identifies points in different $\Gamma$-cosets, which gives \eqref{eq:affine-coset-sum}. Moreover, $\conv(A_\xi)+z_\xi\subseteq K$, so $\Gamma\cap\conv(A_\xi)=A_\xi$. Changing $z_\xi$ translates $A_\xi$ by an element of $\Gamma$ and preserves its residue counts. Finiteness follows from the compactness of $K$ and the discreteness of $\Lambda$.
\end{proof}

It suffices to compare consecutive moduli. For a nonempty set in rank $i$, $\Ncount{1}{A}=1$ and $\Ncount{2}{A}\leq2^i$, so $1\to2$ is immediate.

\begin{lemma}[Line formula]\label{lem:line-formula}
Let $L$ have rank $i\geq1$, and let $A=\conv(A)\cap L$ be a nonempty finite set of affine dimension at most one. If $n=|A|$, then
$$\Ncount{q}{A;L}=\min(n,q) \qquad(q\in\Z_{>0}).$$
Consequently, $\Ncount{q}{A;L}/q^i$ is nonincreasing in $q$.
\end{lemma}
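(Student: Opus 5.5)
The plan is to describe $A$ explicitly as an arithmetic progression and then count residues directly. If $n=1$, then $A$ is a single point, so $\Ncount{q}{A;L}=1=\min(1,q)$ for every $q$.

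Suppose $n\geq2$. Since $A$ has affine dimension one, it spans a line. Choose $a_0,a_1\in A$ and let $v\in L$ be the primitive vector in the direction $a_1-a_0$. The set $\ell\cap L$, where $\ell$ is the affine line through $a_0$ and $a_1$, equals $a_0+\Z v$, because $v$ is primitive in $L$. The set $\conv(A)$ is a segment on $\ell$, so lattice-convexity gives
$$A=\conv(A)\cap L=\{b+jv:0\leq j\leq n-1\}$$
for some $b\in L$.

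Next we decide when two points of $A$ are congruent. We have $b+jv\equiv b+kv \pmod{qL}$ exactly when $(j-k)v\in qL$. Because $v$ is primitive, it extends to a basis of $L$. Hence $mv\in qL$ holds iff $q\mid m$. It follows that the residue classes of $A$ modulo $qL$ correspond bijectively to the residues of $\{0,\dots,n-1\}$ modulo $q$. There are exactly $\min(n,q)$ such residues, which proves $\Ncount{q}{A;L}=\min(n,q)$.

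For the monotonicity claim, note that $\min(n,q)/q^i=\min(n/q^i,\,q^{1-i})$. Here $n/q^i$ is nonincreasing in $q$, and so is $q^{1-i}$ because $i\geq1$. The minimum of two nonincreasing functions is nonincreasing. The only point that needs care is the fact that a primitive vector extends to a basis, which is standard for lattices; that fact gives $mv\in qL\iff q\mid m$.
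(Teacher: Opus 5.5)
Your proof is correct and follows essentially the same route as the paper, which views $A$ as $n$ consecutive points of the rank-one lattice $L\cap H$ and uses $qL\cap H=q(L\cap H)$; that identity is the same fact as your observation that a primitive $v$ extends to a basis, so $mv\in qL$ iff $q\mid m$. For monotonicity the paper writes $\min(n,q)/q^i=q^{1-i}\min\{n/q,1\}$ as a product of nonnegative nonincreasing functions, which is equivalent to your minimum of $n/q^i$ and $q^{1-i}$.
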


\begin{proof}
The formula is immediate for a singleton. Otherwise, translate one point of $A$ to the origin and put $H=\lin_{\R}(A-A)$ and $L_H=L\cap H$. The set $A$ consists of $n$ consecutive points of the rank-one lattice $L_H$. Since
$$qL\cap H=q(L\cap H)=qL_H,$$
congruence on $A$ modulo $qL$ is the same as congruence modulo $qL_H$. This proves the formula. Finally,
$$\frac{\min(n,q)}{q^i}=q^{1-i}\min\{n/q,1\}$$
is a product of two nonnegative nonincreasing functions.
\end{proof}

Together with Lemma~\ref{lem:affine-cosets}, this proves the rank-one case of Theorem~\ref{thm:main} and handles planar sets of affine dimension at most one. For rank zero, the residue count is independent of the modulus.

We next prove the comparison from modulus $2$ to modulus $3$. The argument turns a missing parity class into three missing classes modulo $3L$.

\begin{proposition}[Full-lattice base step]\label{prop:base-step}
Let $A=\conv(A)\cap L$ be a finite lattice-convex set in a rank-two lattice. Then
$$\frac{\Ncount{2}{A}}4\geq\frac{\Ncount{3}{A}}9.$$
\end{proposition}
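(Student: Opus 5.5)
The plan is a case analysis on $\Ncount{2}{A}\in\{1,2,3,4\}$. The inequality says $\Ncount{3}{A}\leq\tfrac94\Ncount{2}{A}$. If $\Ncount{2}{A}=4$ it is trivial, since $\Ncount{3}{A}\leq9$. If $A$ has affine dimension at most one, Lemma~\ref{lem:line-formula} gives the comparison directly. So I assume $A$ is two-dimensional.

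First I show that a two-dimensional lattice-convex $A$ has $\Ncount{2}{A}\geq3$. Since $\conv(A)$ is a two-dimensional lattice polygon, it contains an empty lattice triangle $T$ with vertices in $A$. By Pick's theorem $T$ is unimodular. After a change of basis of $L$ and a translation, its vertices are $0,e_1,e_2$. These lie in three distinct classes modulo $2L$. This rules out $\Ncount{2}{A}\in\{1,2\}$, and leaves only the case $\Ncount{2}{A}=3$. There we must show $\Ncount{3}{A}\leq6$, which suffices because $27/4<7$. In the basis chosen above, the missing parity class is $e_1+e_2+2L$.

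The key structural step is to show that such an $A$ has lattice width one. Precisely, I claim there is a primitive functional $\ell\in L^*$ with $\ell(A)\subseteq\{c,c+1\}$ for some integer $c$. The tool is the midpoint principle: if $p,q\in A$ and $p\equiv q\pmod{2L}$, then $(p+q)/2\in\conv(A)\cap L=A$, so that midpoint also avoids the missing class. Assume instead that $A$ meets at least three consecutive lattice lines in every primitive direction. I would take a lattice triangle in $A$ of minimal area among those not of width one. Such a triangle is not unimodular, so it contains an interior or edge lattice point. I would then run a descent: repeatedly replace a vertex by a midpoint of two congruent points, or by the extra lattice point. The aim is to reach three points of $A$ whose classes modulo $2L$, together with the midpoints they force, cover all four parity classes, contradicting $\Ncount{2}{A}=3$.

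A concrete way to organize the descent is to work with the three represented classes $0$, $e_1$, $e_2$ modulo $2L$. Points of $A$ in the class of $0$ form a set closed under midpoints within $A$, and similarly for the other two classes. Any point $x\in A$ with $x\equiv e_1+e_2\pmod 2$ is forbidden. One shows that a point on a third line forces, via a midpoint with a suitable vertex of $T$, a point of the forbidden class. This is the step I expect to be the main obstacle. The difficulty is choosing the direction $\ell$ correctly; likely $\ell$ should be one of $e_1^*$, $e_2^*$, $e_1^*+e_2^*$ (or their negatives), since the corresponding lines through $0,e_1,e_2$ are exactly the candidates.

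Once width one is established, the count is immediate. The set $A$ is the union of at most two lattice segments, on the lines $\ell=c$ and $\ell=c+1$. These lines are distinct modulo $3L$, since $\ell$ is primitive and $1\not\equiv0\pmod3$. Each segment lies on a rank-one sublattice coset, so it meets at most $3$ classes modulo $3L$, as in the proof of Lemma~\ref{lem:line-formula}. Hence $\Ncount{3}{A}\leq6\leq\tfrac94\cdot3$, which completes the proof.
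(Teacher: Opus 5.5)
\textbf{The gap: the width-one claim is false.} Your reduction to showing $\Ncount{2}{A}=3\Rightarrow\Ncount{3}{A}\leq 6$ is correct, and so is your final count for width-one sets. But the structural claim in between does not hold: a two-dimensional lattice-convex set that misses a parity class need not have lattice width one.

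Take $L=\Z^2$ and $A=\{(0,0),(\pm1,0),(0,\pm1)\}$, the lattice points of $\conv\{(\pm1,0),(0,\pm1)\}$.
\begin{itemize}
\item This $A$ is lattice-convex and contains your triangle $0,e_1,e_2$.
\item It misses the class $e_1+e_2+2L$, so $\Ncount{2}{A}=3$.
\item Every nonzero functional $\ell=(\alpha,\beta)$ takes the values $0,\pm\alpha,\pm\beta$ on $A$. So no primitive $\ell$ has $\ell(A)$ inside two consecutive integers.
\end{itemize}
The sets $\conv\{(-1,0),(n,0),(0,\pm1)\}\cap\Z^2$ give infinitely many further examples of width two. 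Your descent cannot succeed on these sets, because the midpoint principle produces nothing new. In the example above, the only congruent pairs are $\pm e_1$ and $\pm e_2$, and both have midpoint $0$. For instance, $-e_2$ lies on a third line for $\ell=e_2^*$, but its only partner is $e_2$, and their midpoint lies in a permitted class. So the ``two segments, at most three classes each'' count never becomes available. This happens even though the proposition itself holds here, with $\Ncount{3}{A}=5$.

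\textbf{What the paper does instead.} You need a mechanism that produces three missing classes modulo $3L$ directly, without any width statement. The paper's route is as follows.
\begin{itemize}
\item Take a missing class $u+2L$ and put $P=\conv(A)$. Then $(P-u)/2$ is compact and lattice-free.
\item Enclose $(P-u)/2$ in the interior of a maximal lattice-free convex set $M$. By the Basu--Conforti--Cornu\'ejols--Zambelli classification, $M$ is a strip or a polygon with a lattice point in the relative interior of each edge.
\item Choose three boundary lattice points $z_1,z_2,z_3$ of $M$ that are pairwise incongruent modulo $3L$. In the polygon case, a congruence $z_i\equiv z_j$ would make $(2z_i+z_j)/3$ an interior lattice point of $M$.
\item Then $P$ misses each class $u+2z_i+3L$. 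Otherwise $(p-u+z_i)/3=\tfrac23\cdot\tfrac{p-u}{2}+\tfrac13z_i$ would be a lattice point in $\operatorname{int}M$.
\end{itemize}
If you prefer to keep an elementary midpoint-style argument, aim it at exhibiting three missed residue classes modulo $3L$, not at lattice width one.
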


\begin{proof}
The empty case is immediate, and Lemma~\ref{lem:line-formula} handles sets of affine dimension at most one. Suppose that $P=\conv(A)$ is two-dimensional. A triangle of least positive area with vertices in $A$ has no other lattice points. By Pick's theorem, its two edge vectors at a vertex form a basis of $L$, so $\Ncount{2}{A}\geq3$. The case $\Ncount{2}{A}=4$ is immediate. It remains to show that
\begin{equation}\label{eq:base-three-six}
    \Ncount{2}{A}=3\quad\Longrightarrow\quad \Ncount{3}{A}\leq6.
\end{equation}

Choose a missing class $u+2L$ and put $Q=(P-u)/2$. Since $Q$ is compact and disjoint from $L$, slightly enlarge it without meeting $L$ and extend the enlarged set to a maximal lattice-free convex set $M$. Thus $Q\subset\operatorname{int}M$ and $\operatorname{int}M\cap L=\varnothing$. By \cite[Theorem~2.2 and Corollary~2.2]{BasuConfortiCornuejolsZambelli2010}, $M$ is a strip with lattice boundary lines or a bounded polygon with a lattice point in the relative interior of every edge.

Choose $z_1,z_2,z_3\in\partial M\cap L$ distinct modulo $3L$ as follows. For a strip, take three consecutive lattice points on one boundary line. For a polygon, take lattice points in the relative interiors of three different edges. In the polygon case, a congruence $z_i\equiv z_j\pmod{3L}$ with $i\ne j$ would give
$$\frac{2z_i+z_j}{3}\in\operatorname{int}M\cap L,$$
a contradiction.

If $p\in P\cap(u+2z_i+3L)$, then
$$\frac{p-u+z_i}{3}=\frac23\frac{p-u}{2}+\frac13z_i\in\operatorname{int}M\cap L,$$
a contradiction. Thus $P$ misses the three distinct classes $u+2z_i+3L$, and $\Ncount{3}{A}\leq6$.
\end{proof}
\section{Residue representatives and lattice boxes}\label{sec:terminal}

We work in a rank-two lattice $L$ and write $\Ncount{q}{A}$ for $\Ncount{q}{A;L}$. For $x\in L$ and primitive $v\in L$, the segment $[x,x+mv]$, $m\in\Z_{\geq0}$, has \emph{lattice length} $m$.

Our aim is to compare $r$ and $s=r+1$. We first replace $A$ by a lattice-convex subset containing exactly one point in each class represented modulo $sL$.

\begin{lemma}[Residue-preserving reduction]\label{lem:terminal-peeling}
Let $L$ be a rank-two lattice, let $A=\conv(A)\cap L$ be finite and nonempty, and let $s$ be a positive integer. There is a lattice-convex set $B\subseteq A$ such that
\begin{equation}\label{eq:terminal-peeling}
    \lvert B\rvert=\Ncount{s}{B}=\Ncount{s}{A}.
\end{equation}
\end{lemma}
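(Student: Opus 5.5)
The plan is to argue by induction on $|A|$, removing one vertex of $\conv(A)$ at a time. Two basic facts drive the induction. First, for a lattice-convex $A$ and a vertex $v$ of $P=\conv(A)$, the set $A\setminus\{v\}$ is again lattice-convex: $\conv(A\setminus\{v\})\subseteq P$ does not contain $v$, so its lattice points are exactly $A\setminus\{v\}$. Second, removing a point $v$ does not change $\Ncount{s}{\,\cdot\,}$ as long as some other point of the set lies in $v+sL$. Since $\Ncount{s}{B}=|B|$ says exactly that $B$ meets each of its classes once, the statement follows once we prove the following claim. If $|A|>\Ncount{s}{A}$, then some vertex $v$ of $P$ is congruent modulo $sL$ to another point of $A$. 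Removing such a $v$ and iterating, we stop at a lattice-convex $B$ with $|B|=\Ncount{s}{B}=\Ncount{s}{A}$. The degenerate cases are quick. A singleton is already terminal. If $P$ is a segment, then $A$ consists of consecutive points on a line with primitive direction $v$, and two of them are congruent only if $|A|>s$. In that case both endpoints have partners at distance $s$ along the line (this is essentially Lemma~\ref{lem:line-formula}).

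To prove the claim for two-dimensional $P$, take $x\ne y$ in $A$ with $w:=y-x\in sL$, and consider the polygon $P_w:=P\cap(P+w)$, which contains $y$. Fix a linear functional $f$ with $f(w)>0$ that is generic, meaning injective on $L$. Let $v$ be the vertex of $P$ maximizing $f$. If $v-w\in P$, then $v-w\in A$ by lattice-convexity, and we are done. The idea is that this should hold for a good choice of the pair $(x,y)$. Among all pairs in $A$ congruent modulo $sL$, choose one for which $f(y)$ is maximal. If $v-w\notin A$ for every such choice, we try to produce a new congruent pair $(x',y')$ with $f(y')>f(y)$. The natural candidate is to slide the segment $[x,y]$ parallel to $w$ toward $v$ inside $P$. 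Both endpoints stay in $P$ along the slide. The segment $[y-w,v]$ lies in $P$, and we use it to find lattice points: either $v$ itself has a partner in $v-w+\Z w'$ with $w'\in sL$, or an intermediate lattice translate $[x+z,y+z]\subseteq P$ with $f(z)>0$ exists.

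The main obstacle is this lattice step. The continuous slide only gives real translates of $[x,y]$ in $P$, while we need translates by vectors of $L$. I would first try to handle it using a triangle of minimal area in $\conv\{x,y,v\}$. By Pick's theorem, as in Proposition~\ref{prop:base-step}, such a triangle supplies a basis $e_1,e_2$ of $L$ in which $x$, $y$ and $v$ have convenient coordinates. Then the parallelogram spanned by $[x,y]$ and the direction toward $v$ contains the needed lattice translate, because its width in the second coordinate is at least $1$. If this direct approach fails, the fallback is to peel not just a single vertex but a whole cap $A\cap\{f>t\}$. We would choose $t$ so that every class met by the cap is still met by $A\cap\{f\le t\}$, which is automatically lattice-convex. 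The claim then becomes the weaker statement that the topmost class-duplicated point can be cut off by a half-plane without losing a class. That statement follows from the maximality of $f(y)$ by the same translation argument, applied to every point above $y$.
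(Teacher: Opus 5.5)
Your outer reduction matches the paper's: deleting a vertex of $P=\conv(A)$ preserves lattice-convexity. So everything rests on the claim that if $|A|>\Ncount{s}{A}$, then some vertex of $P$ has a partner in its class modulo $sL$. The gap is in your proof of this claim. Choosing the vertex, or the duplicated point, that maximizes a generic $f$ with $f(w)>0$ is the wrong selection. The sliding step is also not merely a lattice-rounding issue, because it already fails for real translates.

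Take $s=3$, $L=\Z^2$ and $P=\conv\{(-1,-1),(1,1),(0,-3)\}$. Then $A=\{(-1,-1),(1,1),(0,0),(0,-1),(0,-2),(0,-3)\}$, and the only congruent pair is $x=(0,-3)$, $y=(0,0)$, so $w=(0,3)$. Let $f(X,Y)=Y+\varepsilon X$ with small irrational $\varepsilon>0$.
\begin{itemize}
\item The $f$-maximal vertex is $(1,1)$, which is alone in its class, and there is no other pair to move to.
\item The vertical chord of $P$ at abscissa $\xi$ has length $3-3|\xi|$, so no translate of $[x,y]$ other than itself lies in $P$.
\item Your fallback fails too. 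The topmost duplicated point $(0,0)$ is the midpoint of the edge $[(-1,-1),(1,1)]$, so every cap $\{f>t\}$ containing it also contains $(1,1)$ and loses that class.
\end{itemize}
Yet the vertex $(0,-3)$ does have a partner, and nothing in your argument selects an $f$ that avoids this situation.

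The missing idea is to move transversally to $w$, toward longer chords, rather than toward an $f$-extreme vertex. Let $u$ be the primitive direction of $w$, and let $\phi$ be a linear form with kernel $\R u$. The length of $P\cap\{\phi=h\}$ is a concave, piecewise-linear function of $h$, with breakpoints only at values $\phi(z)$ of vertices $z$. Hence a longest chord parallel to $u$ has a vertex $z$ of $P$ as an endpoint; this is the B\'ar\'any--F\"uredi longest-chord lemma the paper invokes. That chord is at least as long as $[x,y]$, whose lattice length is a positive multiple of $s$. Orienting $u$ into the chord gives $z+su\in P\cap L=A$ with $z+su\equiv z\pmod{sL}$.

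This also removes your lattice-step obstacle. You never need a lattice translate of the whole segment $[x,y]$, only $s$ primitive steps from a lattice vertex, and these land in $L$ automatically.
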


\begin{proof}
Suppose that two points of $A$ differ by $su\neq0$, and let $v$ be the primitive direction of $u$. The polygon $P=\conv(A)$ has a chord of lattice length at least $s$ in direction $v$. If $P$ is a segment, choose an endpoint $z$ from which $P$ contains $s$ primitive steps in direction $v$. Otherwise, the longest-chord lemma of B\'ar\'any and F\"uredi \cite[Lemma~1]{BaranyFuredi2001} supplies a longest chord in this primitive direction having a lattice vertex $z$ of $P$ as one endpoint. Orient $v$ into the chord. Then $z,z+sv\in A$.

Delete $z$. Since $z$ is a vertex,
$$A\setminus\{z\}=\conv(A\setminus\{z\})\cap L,$$
so lattice convexity is preserved. The class of $z$ modulo $sL$ remains represented by $z+sv$. Replace $P$ by the convex hull of the current set and iterate. Each deletion removes one point, so the process terminates with reduction modulo $sL$ injective, giving \eqref{eq:terminal-peeling}.
\end{proof}

Since $B\subseteq A$, we also have $\Ncount{r}{B}\leq\Ncount{r}{A}$ for every $r$. Thus a comparison for $B$ transfers to $A$:
$$\frac{\Ncount{r}{A}}{r^2} \geq\frac{\Ncount{r}{B}}{r^2} \geq\frac{\Ncount{s}{B}}{s^2} =\frac{\Ncount{s}{A}}{s^2}.$$

For the adjacent comparison, let $s=r+1$ and suppose that $A$ is injective modulo $sL$. Put
\begin{equation}\label{eq:defect-definition}
    t:=\lvert A\rvert-\Ncount{r}{A} =\Ncount{s}{A}-\Ncount{r}{A}.
\end{equation}
Since $s^2-r^2=2r+1$, the adjacent comparison is equivalent to
\begin{equation}\label{eq:defect-criterion}
    (2r+1)\Ncount{r}{A}\geq r^2t.
\end{equation}
We next place $A$ in a lattice box. For $s=r+1$, this gives the four-corner description of the fibers modulo $rL$ in Corollary~\ref{cor:four-block}.

\begin{lemma}[Lattice-box containment]\label{lem:lattice-box}
Let $s$ be a positive integer, let $L$ be a rank-two lattice, and let $P$ be a two-dimensional lattice polygon. Suppose that $P\cap L$ is injective modulo $sL$. There are $z\in L$, a basis $e_1,e_2$ of $L$, and nonnegative integers $m_1,m_2$ such that
\begin{equation}\label{eq:sharp-box}
    P\subseteq z+[0,m_1]e_1+[0,m_2]e_2, \qquad m_i\leq\left\lfloor\frac{3s-1}{2}\right\rfloor.
\end{equation}
\end{lemma}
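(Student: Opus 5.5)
The plan is to convert the injectivity hypothesis into a statement about the difference body $K:=P-P$ and then read off the box from two dual lattice functionals. Write $L^*$ for the dual lattice and, for $f\in L^*$, $w_f(P):=\max_P f-\min_P f=h_K(f)$; this is a nonnegative integer because $P$ is a lattice polygon. If $e_1,e_2$ is a basis of $L$ with dual basis $f_1,f_2$, then $P\subseteq z+[0,m_1]e_1+[0,m_2]e_2$ with $m_i=w_{f_i}(P)$ and $z=\min_P f_1\,e_1+\min_P f_2\,e_2\in L$. So it suffices to find a basis $f_1,f_2$ of $L^*$ with $h_K(f_i)<3s/2$; integrality of $h_K(f_i)$ then gives $m_i\leq\lfloor(3s-1)/2\rfloor$.

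\emph{Step 1 (no interior lattice points in $K/s$).} I will show that $\operatorname{int}(K/s)\cap L=\{0\}$. Suppose that $w\in L\setminus\{0\}$ with $sw\in\operatorname{int}K$. Write $w=kv$ with $v$ primitive. Then $P$ has a chord in direction $v$ of length (in units of $v$) strictly greater than $sk\geq s$. By the longest-chord lemma of B\'ar\'any and F\"uredi \cite[Lemma~1]{BaranyFuredi2001}, already used in Lemma~\ref{lem:terminal-peeling}, some longest chord in direction $v$ has a lattice vertex $y$ of $P$ as an endpoint. Hence $y$ and $y+sv$ both lie in $P\cap L$. They are congruent modulo $sL$, contradicting injectivity. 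By Minkowski's first theorem, $\operatorname{area}(K)\leq 4s^2\det L$.

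\emph{Step 2 (choice of dual basis).} I will choose $f_1\in L^*$ primitive minimizing $h_K$, which is the lattice width $w_1$ of $P$. Then I will choose $f_2$ completing $f_1$ to a basis and minimizing $h_K(f_2)=:w_2$ among all such completions. Adding multiples of $f_1$ to $f_2$ lets me assume a Gauss-type reduction, so that $w_1\leq w_2$ and $w_2\leq h_K(f_2\pm f_1)$. Now take $p,q\in K$ with $f_1(p)=w_1$ and $f_2(q)=w_2$. By symmetry, $K$ contains the parallelogram $\conv(\pm p,\pm q)$, whose area is $2\lvert\det(p,q)\rvert$. In the coordinates given by $f_1,f_2$ this determinant is $\lvert f_1(p)f_2(q)-f_2(p)f_1(q)\rvert\det L$. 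The bounds $\lvert f_1(q)\rvert\leq w_1$ and $\lvert f_2(p)\rvert\leq w_2$ are automatic. The reduction should improve these, by comparing with $h_K(f_2\pm f_1)$, so that the cross term is at most a fixed fraction of $w_1w_2$. Combined with Step~1, this yields an inequality of the form $w_1w_2\leq c\,s^2$.

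\emph{Step 3 (individual bounds; the main obstacle).} An area bound controls only the product $w_1w_2$, and reaching the sharp constant $3/2$ for each $w_i$ separately is the delicate step. I would first exploit the longest chords directly. Let $v$ be a primitive direction realizing the first successive minimum $\lambda_1$ of $K$ with respect to $L$. Step~1 gives $\lambda_1\geq 1/s$, and the chord of $P$ in direction $v$ has lattice length less than $s$. The width of $P$ in any $f$ with $f(v)=\pm1$ is then controlled by the longest chord in direction $v$ together with the chord structure in a second primitive direction $u$ with $\det(u,v)=\pm1$; each of these chords has lattice length at most $s-1$. I expect the sharp estimate to come from a short case analysis on the position of a vertex of $P$ relative to these two chords. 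The goal of that analysis is $w\leq (s-1)+\lfloor (s+1)/2\rfloor$, and $f_1,f_2$ would be chosen among the functionals dual to $u,v$. If that case analysis does not close, the fallback is to combine Step~2's product bound with the lower bound $w_1\geq$ (some multiple of $s$) forced by the existence of $\Ncount{s}{P\cap L}$ distinct points. That route would give a weaker but still linear box, and I would then try to sharpen it to $\lfloor (3s-1)/2\rfloor$ by examining the extremal configurations.
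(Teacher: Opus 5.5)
There is a genuine gap: the proposal never proves the one quantitative fact the lemma rests on. Your Step~1 is the paper's first step, and your Step~2 basis choice would work as a substitute for the paper's determinant-halving argument (in the plane a basis of $L^*$ realizes both successive minima of $K^*=\{f:h_K(f)\leq1\}$).

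The missing fact is that two independent integral forms satisfy $h_K(f)<3s/2$, where $K=P-P$. In terms of successive minima this is a planar transference inequality $\lambda_1(K,L)\,\lambda_2(K^*,L^*)\leq 3/2$. The paper does not derive it. It imports it from Ambro and Ito (Theorem~6.4 of their paper) and applies it to $\lambda_1(P-P,L)>1/s$.

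Note the strict inequality. Your Step~1 records only $\operatorname{int}(K/s)\cap L=\{0\}$, i.e.\ $\lambda_1\geq 1/s$. With constant $3/2$ that gives only $w\leq 3s/2$, which for even $s$ exceeds $\lfloor(3s-1)/2\rfloor$ by one. Your own argument already gives the strict version: a boundary point $sw\in K$ yields a chord of lattice length at least $s$, and the longest-chord lemma applies.

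Steps~2 and~3 do not replace the transference input.
\begin{itemize}
\item A product bound $w_1w_2\leq cs^2$ cannot give individual bounds. The premise of your fallback, a lower bound $w_1\geq cs$, is false. The polygon $\conv\{(0,0),(s-1,0),(0,1)\}$ is injective modulo $sL$ and has lattice width $1$, so there the product bound says only $w_2=O(s^2)$.
\item The natural elementary combination is Minkowski's second theorem for $K$ and $K^*$, together with $\lambda_2(K,L)\lambda_1(K^*,L^*)\geq 1$ and the planar Mahler inequality $\operatorname{area}(K)\operatorname{area}(K^*)\geq 8$. It yields only $\lambda_1(K,L)\lambda_2(K^*,L^*)\leq 2$, hence $m_i\leq 2s-1$. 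That is weaker than the statement. It is also too weak for Corollary~\ref{cor:four-block}, which needs $m_i\leq 2r-1$ when $s=r+1$, not $2r+1$.
\item Step~3 is an expectation, not an argument. Once $v$ realizes $\lambda_1(K,L)$, the primitive form vanishing on $v$ has a fixed width. The obvious area estimate bounds that width only by $2/\lambda_2(K,L)<2s$, and your sketch does not show where the constant $3/2$ would come from.
\end{itemize}

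To close the gap, either cite a planar transference theorem with constant $3/2$, as the paper does, or prove one. That inequality is the real content of the lemma.
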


\begin{proof}
For a linear form $f:\lin_{\R}L\to\R$ with $f(L)\subseteq\Z$, put
$$w(f)=\max_{x\in P}f(x)-\min_{x\in P}f(x).$$
The longest-chord argument in Lemma~\ref{lem:terminal-peeling} gives
\begin{equation}\label{eq:lambda-one}
    \lambda_1(P-P,L)>\frac1s.
\end{equation}
The planar transference inequality of Ambro and Ito \cite[Theorem~6.4]{AmbroIto2020} therefore gives two linearly independent forms $f_1,f_2$ satisfying $w(f_i)<3s/2$.

Fix a basis of $L$ and compute determinants using the coefficient vectors of the forms in this basis. Among the independent pairs satisfying the width bounds, choose $f_1,f_2$ to minimize the positive integer $\Delta=|\det(f_1,f_2)|$. If $\Delta>1$, the centered parallelogram spanned by $f_1,f_2$ contains a nonzero form $g$ taking integer values on $L$, with
$$g=\alpha f_1+\beta f_2,\qquad |\alpha|,|\beta|\leq\frac12.$$
Then
$$w(g)\leq|\alpha|w(f_1)+|\beta|w(f_2)<3s/2.$$
Moreover,
$$|\det(f_1,g)|=|\beta|\Delta,\qquad |\det(g,f_2)|=|\alpha|\Delta.$$
Since $g\ne0$, at least one of the pairs $(f_1,g)$ and $(g,f_2)$ is independent. This pair satisfies the width bounds and has absolute determinant at most $\Delta/2<\Delta$, contradicting the choice of $f_1,f_2$. Thus $\Delta=1$, and the vectors dual to $f_1,f_2$ form a basis $e_1,e_2$ of $L$.

Put $a_i=\min_{x\in P}f_i(x)$ and $m_i=w(f_i)$. These are integers, since the extrema occur at lattice vertices. For $z=a_1e_1+a_2e_2$, we obtain $P\subseteq z+[0,m_1]e_1+[0,m_2]e_2$ and $m_i<3s/2$, which gives \eqref{eq:sharp-box}.
\end{proof}

For $r\geq3$ and $s=r+1$, the bound in \eqref{eq:sharp-box} is at most $2r-1$. We use the basis supplied by Lemma~\ref{lem:lattice-box}. Inside this box, the points of a fixed residue class modulo $rL$ have at most two values in either coordinate, separated by $r$. They therefore lie among the four corners of a translate of $[0,r]e_1+[0,r]e_2$.

\begin{corollary}[Four-corner fibers]\label{cor:four-block}
Let $r\geq3$, let $s=r+1$, and let $P$ be a two-dimensional lattice polygon in a rank-two lattice $L$. Suppose that $A=P\cap L$ is injective modulo $sL$. In the basis of Lemma~\ref{lem:lattice-box}, every fiber $F=A\cap(a+rL)$ has at most four points, and for each such $F$ there is a $b_F\in L$ such that
\begin{equation}\label{eq:four-block}
    F-b_F\subseteq r\{0,e_1,e_2,e_1+e_2\}.
\end{equation}
If $|F|=4$, then
$$A=\bigl(b_F+[0,r]e_1+[0,r]e_2\bigr)\cap L,$$
and hence
$$\frac{\Ncount{r}{A}}{r^2} =\frac{\Ncount{s}{A}}{s^2}=1.$$
\end{corollary}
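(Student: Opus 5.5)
Work in the coordinates given by the basis $e_1,e_2$ of Lemma~\ref{lem:lattice-box}. After translating by $z$, every point of $A$ has the form $x_1e_1+x_2e_2$ with $0\le x_i\le m_i$. Since $m_i\le\lfloor(3s-1)/2\rfloor=\lfloor(3r+2)/2\rfloor$, we have $m_i\le 2r-1$ once $r\ge3$: for $r=3$ the bound is $5$, and in general $\lfloor(3r+2)/2\rfloor\le 2r-1$ is equivalent to $r\ge 3$ up to the floor.

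Fix a fiber $F=A\cap(a+rL)$. Its points have first coordinates in one residue class modulo $r$ inside $[0,m_1]\subseteq[0,2r-1]$. An interval of length less than $2r$ contains at most two integers of a given class mod $r$, and if there are two they differ by exactly $r$. The same holds for the second coordinate. Let $c_1$ and $c_2$ be the smallest coordinate values occurring among points of $F$, and set $b_F=c_1e_1+c_2e_2$. Every point of $F$ then has coordinates in $\{c_1,c_1+r\}\times\{c_2,c_2+r\}$. This gives $|F|\le4$ and \eqref{eq:four-block}.

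Now suppose $|F|=4$. Then all four corners $b_F$, $b_F+re_1$, $b_F+re_2$ and $b_F+r(e_1+e_2)$ lie in $A$. By lattice convexity, $A$ contains every lattice point of the square $S=b_F+[0,r]e_1+[0,r]e_2$. For the reverse inclusion, observe that the first coordinates $c_1$ and $c_1+r$ both occur. Any point of $A$ with first coordinate outside $[c_1,c_1+r]$ would make the span of first coordinates at least $r+1$. That is still allowed by $m_1\le 2r-1$, so the box bound alone does not finish the argument. Instead I use injectivity modulo $sL=(r+1)L$. The set $S\cap L$ is an $(r+1)\times(r+1)$ grid of consecutive coordinates, so it already meets every class modulo $(r+1)L$. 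If $A$ contained any point outside $S$, that point would share a class mod $sL$ with some point of $S\cap L\subseteq A$, which contradicts injectivity. Therefore $A=S\cap L$.

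Finally, count residues for this $A$. It has $(r+1)^2$ points forming a complete system modulo $sL$, so $\Ncount{s}{A}=s^2$. Every class modulo $rL$ appears in the $r\times r$ subgrid $\{c_1,\dots,c_1+r-1\}\times\{c_2,\dots,c_2+r-1\}$, so $\Ncount{r}{A}=r^2$. Both normalized counts therefore equal $1$.

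The only step needing care is the reverse inclusion in the four-point case. Using injectivity modulo $sL$ there, rather than the width bound, is the key point.
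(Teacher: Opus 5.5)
Your proposal is correct and follows the paper's argument. You use the box bound $m_i\le 2r-1$ to get at most two coordinate values per fiber, separated by $r$, and lattice convexity to place the full $(r+1)\times(r+1)$ grid inside $A$. Your final step, that the grid is a complete residue system modulo $sL$, is the same as the paper's observation that injectivity gives $\lvert A\rvert\le s^2$.
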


\begin{proof}
Only the four-point assertion needs proof. The four corners force the parallelogram $b_F+[0,r]e_1+[0,r]e_2$ into $P$. It contains $(r+1)^2=s^2$ lattice points. Injectivity modulo $sL$ gives $|A|\leq s^2$, so these are all the points of $A$, and they represent all $r^2$ classes modulo $rL$.
\end{proof}
\section{Lattice segments and their intersections}\label{sec:lattice-chords}

Fix $r\geq3$ and $s=r+1$. By Lemmas~\ref{lem:terminal-peeling} and~\ref{lem:line-formula}, we may work with a two-dimensional lattice polygon $P$ in a rank-two lattice $L$ such that $A=P\cap L$ is injective modulo $sL$. We use the basis $e_1,e_2$ of Corollary~\ref{cor:four-block} and assume that no four-point fiber modulo $rL$ occurs, since that corollary already settles the comparison in this case. By \eqref{eq:defect-definition},
\begin{equation}\label{eq:fiber-defect}
    t=\sum_{\substack{\xi\in L/rL\\A\cap\xi\neq\varnothing}} (|A\cap\xi|-1).
\end{equation}
From each two-point fiber, select the segment joining its points. From each three-point fiber, select the joining segment parallel to $e_1$ and the one parallel to $e_2$. A singleton contributes no segment. Thus a fiber $F$ contributes $|F|-1$ segments, at most two, giving exactly $t$ segments in total. Each selected segment has the form
\begin{equation}\label{eq:physical-chord}
    Q=[x,x+rv]\subseteq P, \qquad Q\cap L=\{x,x+v,\ldots,x+rv\}\subseteq A.
\end{equation}
Here $v$ is primitive and, up to sign, is one of
\begin{equation}\label{eq:four-directions}
    e_1,\qquad e_2,\qquad e_1+e_2,\qquad e_1-e_2.
\end{equation}
Selected diagonal segments arise only from two-point fibers.

The image of $Q\cap L$ modulo $rL$ is the line
\begin{equation}\label{eq:torus-support}
    \ell_r(Q):=\{x+jv+rL:j=0,\ldots,r-1\}\subseteq L/rL.
\end{equation}
Since a primitive vector extends to a basis of $L$, it has order $r$ in $L/rL$. Thus $Q$ contains $r+1$ lattice points, whereas $\ell_r(Q)$ has $r$ points.

The next two configurations give $\Ncount{r}{A}=r^2$. The desired comparison then follows from $\Ncount{s}{A}=|A|\leq s^2$.

\begin{lemma}[Repeated images of parallel segments]\label{lem:repeated-support}
Suppose that two distinct parallel selected segments $Q,Q'$ have the same image modulo $rL$. Then $\Ncount{r}{A}=r^2$.
\end{lemma}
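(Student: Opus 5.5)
The plan is to show directly that $P$ meets every class of $L/rL$. The idea is that the two segments bound a convex region whose sections parallel to $v$ all have lattice length $r$. Let $Q=[x,x+rv]$ and $Q'=[x',x'+rv]$ be the two selected segments, with the same primitive direction $v$ (we may orient both the same way). Extend $v$ to a basis $v,u$ of $L$. Equality of the images \eqref{eq:torus-support} means $x'\equiv x+jv\pmod{rL}$ for some integer $j$. So we can write
$$x'=x+jv+r(\alpha v+\beta u),\qquad \alpha,\beta\in\Z.$$

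\emph{Step 1: the segments are not collinear, so $\beta\neq0$.} Suppose $\beta=0$. Then $Q$ and $Q'$ lie on the same lattice line $x+\R v$ and are distinct, each containing $r+1$ consecutive lattice points of that line. The convex hull of $Q\cup Q'$ lies in $P$, and is a segment on this line containing at least $r+2$ consecutive lattice points of $A$. Two of these points differ by $(r+1)v=sv$. This contradicts the standing assumption that $A$ is injective modulo $sL$. Hence $\beta\neq0$, so $|\beta|\geq1$.

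\emph{Step 2: every $v$-row between the segments carries all residues.} Set $R=\conv(Q\cup Q')\subseteq P$. In the coordinates $(a,k)$ of $x+av+ku$, the segment $Q$ lies on the row $k=0$ and $Q'$ on the row $k=r\beta$, and each has real length exactly $r$ in the $v$-coordinate. For $k$ strictly between these values, the section of $R$ by the row $x+ku+\R v$ contains the corresponding convex combination of $Q$ and $Q'$. That combination is a segment of $v$-length exactly $r$ with lattice endpoints offset by an integer shift, so it is a real segment of length $r$ on a lattice line. Such a segment contains at least $r$ consecutive lattice points, and these cover all residues of the coefficient $a$ modulo $r$. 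This step is the only place needing care: one must check that an interval of real length $r$, even with non-integral endpoints, still contains $r$ consecutive integers, which is clear.

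\emph{Step 3: count classes.} The rows $k=0,1,\dots,r|\beta|$ (with the sign of $\beta$) number at least $r+1\geq r$. Among them, $k$ runs through all residues modulo $r$. The class of $x+av+ku$ modulo $rL$ is determined by $(a\bmod r,\,k\bmod r)$, since $v,u$ is a basis. By Step 2, each such row supplies every residue of $a$. Hence $A$ meets all $r^2$ classes, giving $\Ncount{r}{A}=r^2$.

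The main obstacle is Step 1, ruling out the collinear case. It is handled by the injectivity of $A$ modulo $sL$, as above. The rest is the elementary section argument.
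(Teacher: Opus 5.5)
Your proof is correct and follows essentially the same route as the paper's: rule out the collinear case by injectivity modulo $sL$, observe that the two levels differ by a nonzero multiple of $r$, and obtain all $r^2$ classes from the length-$r$ sections of $\conv(Q\cup Q')$ at $r$ consecutive levels. One wording slip: the intermediate sections need not have lattice endpoints, but your argument only uses that a real segment of length $r$ on a lattice line contains $r$ consecutive lattice points, so nothing is lost.
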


\begin{proof}
Choose a basis in which the segment direction is horizontal. The segments cannot lie on the same line, since their convex hull would have lattice length at least $r+1=s$. Thus their horizontal levels are distinct. Equality of their images modulo $rL$ means that these levels differ by $mr$ for some integer $m\geq1$.

By convexity, $\conv(Q\cup Q')\subseteq P$. This convex hull has a horizontal section of length $r$, measured in the chosen lattice coordinates, at every intermediate integer level. Since $mr\geq r$, we can choose $r$ consecutive integer levels on or between the two lines. They give all vertical residues modulo $r$, and every horizontal section of length $r$ contains at least $r$ consecutive lattice points. Thus every horizontal residue also occurs. All lattice points just counted belong to $A=P\cap L$; hence all $r^2$ classes are represented.
\end{proof}

\begin{lemma}[Determinant-two directions]\label{lem:determinant-two}
Let $r\geq3$ be an integer, let $L$ be a rank-two lattice, and let $P$ be a lattice polygon such that $A=P\cap L$ is injective modulo $(r+1)L$. If $P$ contains two segments of lattice length $r$ in primitive directions $u,v$ whose determinant in a basis of $L$ has absolute value two, then $\Ncount{r}{A}=r^2$.
\end{lemma}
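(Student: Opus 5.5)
The plan is to normalize coordinates, locate the two segments relative to each other, and then show directly that $\conv(Q\cup Q')\cap L\subseteq A$ meets every class of $L/rL$.

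\textbf{Normalization.} Write the segments as $Q=[x,x+ru]$ and $Q'=[y,y+rv]$, both in $P$, so $\conv(Q\cup Q')\subseteq P$. Since $u$ is primitive it extends to a basis $u,w$ of $L$. Write $v=au+bw$ with $|b|=2$. Primitivity of $v$ forces $a$ to be odd. Replacing $w$ by $\pm w+ku$ for a suitable integer $k$, we may assume $u=(1,0)$ and $v=(1,2)$ in coordinates with respect to $u,w$. In these coordinates:
\begin{itemize}
\item $Q$ is a horizontal segment at some height $h_0$ with $r+1$ lattice points;
\item $Q'$ meets the heights $c,c+2,\dots,c+2r$, has one lattice point on each, and its horizontal coordinate advances by $1$ per step.
\end{itemize}

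\textbf{Constraints from injectivity modulo $sL$.} The longest-chord bound $\lambda_1(P-P,L)>1/s$ from the proof of Lemma~\ref{lem:lattice-box} says that $P$ contains no lattice chord of length $\ge s$ in any direction.
\begin{itemize}
\item Apply it in direction $w=(0,1)$ and in the horizontal direction. This forces $Q$ and $Q'$ to overlap substantially: the height $h_0$ must lie within the vertical range $[c,c+2r]$ of $Q'$, up to a bounded slack. Otherwise the convex hull contains a vertical or horizontal chord of lattice length $\ge r+1$.
\item Using the symmetry $(X,Y)\mapsto(-X,-Y)$ and translation, reduce to a few normal forms according to where $Q$ crosses or nearly crosses the line of $Q'$.
\end{itemize}

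\textbf{Covering the classes.} The target is to exhibit lattice points of $H=\conv(Q\cup Q')$ whose heights form $r$ consecutive integers and which contain, at each of these heights, $r$ consecutive horizontal positions. Every such point lies in $P\cap L=A$, so this gives $\Ncount{r}{A}=r^2$ exactly as in Lemma~\ref{lem:repeated-support}.

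At the heights of $Q'$ this is easy. For each lattice point $q\in Q'$, the triangle $\conv(Q\cup\{q\})$ lies in $H$. Its horizontal sections shrink linearly toward $q$. The points of $Q'$ themselves also supply one residue at every second height.

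Rows with short sections need a different tool. Residues modulo $r$ are periodic in height, so a short section at height $h$ can be complemented by the section at height $h\pm r$. Since $Q'$ has vertical extent $2r$, both heights usually lie in $H$. The plan is to show, in each normal form, that the two sections together have total horizontal coverage at least $r$ consecutive residues. Concavity of the section length in the height should give this: the two segment lengths add to at least $r$ at the matching heights.

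\textbf{Main obstacle.} I expect the hardest step to be this combinatorial verification near the extreme heights, where $H$ degenerates toward the endpoints of $Q'$. There, parity matters, since $v$ hits only every second height. My first attempt is to use the odd-height points $q+(0,1)$ that lie inside $H$ next to $Q'$. I would also invoke the no-long-chord constraint in the diagonal directions $(1,1)$ and $(1,2)$ to rule out the bad positions.
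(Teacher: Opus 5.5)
There is a genuine gap. The two steps that carry the proof are announced but not carried out, and the tool you propose for the second one does not work as stated.

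First, you claim that the no-long-chord condition makes $Q$ and $Q'$ ``overlap substantially \ldots\ up to a bounded slack'' and reduces matters to ``a few normal forms''. This is never quantified, and a bounded-slack statement is too weak. What is true, and what the covering step needs, is that the configuration is rigid: up to symmetry there are exactly two relative positions.

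Second, the covering cannot come from concavity of section lengths. Suppose the real lengths of the sections at heights $h$ and $h-r$ add up to at least $r$. The two sections may still contain only $r-1$ lattice points between them, because non-integral endpoints can lose a point. Even with enough points, nothing forces their residues modulo $r$ to be complementary: two such intervals can have overlapping residue sets and leave gaps. In the configurations that actually occur, $P$ contains a parallelogram whose rows at heights $h$ and $h-r$ are exactly $[1+h,r]$ and $[1,h]$. That exact alignment comes from the exact position of the segments, not from concavity.

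The missing idea is a better normalization. Since $|\det(u,v)|=2$, the classes of $u,v$ in $L/2L$ are nonzero and dependent, so $u\equiv v\pmod{2L}$. Hence $(u+v)/2,(u-v)/2$ is a basis of $L$, and in it $u=(1,1)$ and $v=(1,-1)$. The segments become $[(0,0),(r,r)]$ and $[(a,r+c),(a+r,c)]$. Two symmetries let you assume $a\geq c\geq0$: swapping coordinates, and exchanging the segments combined with $(x,y)\mapsto(x-a,r+c-y)$.

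The triangle $\conv\{(0,0),(a,r+c),(a+r,c)\}\subseteq P$ meets the line $y=c$ in a segment containing $[ac/(r+c),a+r]$. The absence of a lattice segment of length $r+1$ then gives $\lceil ac/(r+c)\rceil\geq a$, that is, $ar<r+c\leq r+a$. For $r\geq3$ this forces $(a,c)\in\{(0,0),(1,1)\}$. In both cases $[1,r]^2\subseteq P$, and its $r^2$ lattice points represent every class.

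In your coordinates this square becomes the image under $(X,Y)\mapsto(X,X-Y)$: a sheared parallelogram with vertical sides, whose rows are short. That is why your row-by-row approach runs into the pairing of heights $h$ and $h-r$ and the parity issues you anticipate; in the symmetric basis neither arises. The pairing does succeed for the two rigid configurations, but only after the rigidity has been proved. The chords that establish it are those in the directions $(u\pm v)/2$, which in your coordinates are $(1,1)$ and $(0,1)$, not primarily the horizontal direction of $Q$.
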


\begin{proof}
Injectivity modulo $(r+1)L$ forbids a segment of lattice length $r+1$ in $P$. The classes of $u,v$ modulo $2L$ are nonzero and linearly dependent, so $u\equiv v\pmod{2L}$. Consequently, $(u+v)/2$ and $(u-v)/2$ belong to $L$ and have determinant of absolute value one, hence form a basis of $L$. In these coordinates, after translation, the two segments are
$$[(0,0),(r,r)], \qquad [(a,r+c),(a+r,c)], \qquad a,c\in\Z.$$
Interchanging the coordinates swaps $a,c$. Interchanging the segments and applying $(x,y)\mapsto(x-a,r+c-y)$ replaces $(a,c)$ by $(-a,c)$. These operations allow us to assume $a\geq c\geq0$.

By convexity, $P$ contains the triangle with vertices $(0,0),(a,r+c),(a+r,c)$. Its intersection with $y=c$ contains
$$\left[\frac{ac}{r+c},\,a+r\right]\times\{c\}.$$
The absence of a segment of lattice length $r+1$ therefore gives
$$a+r-\left\lceil\frac{ac}{r+c}\right\rceil\leq r, \qquad\text{hence}\qquad ar<r+c\leq r+a.$$
Thus $(r-1)a<r$, so $a\leq1$. If $a=1$, the strict inequality forces $c=1$. Consequently $(a,c)$ is either $(0,0)$ or $(1,1)$.

In the first case, the convex hull of the two segments is $[0,r]^2$. In the second, $(1,1),(r,r)$ lie on the first segment, and joining $(1,1)$ to the endpoints of the second gives $(r,1),(1,r)\in P$. Hence $[1,r]^2\subseteq P$ in both cases. Its $r^2$ lattice points represent every class modulo $rL$, proving $\Ncount{r}{A}=r^2$.
\end{proof}

Two primitive directions are \emph{unimodular} if they form a basis of $L$. Outside the two cases above, the direction classes are pairwise unimodular: among \eqref{eq:four-directions}, only the two diagonals have determinant of absolute value two. Thus at most three direction classes remain.

\begin{lemma}[Intersections and concurrence]\label{lem:physical-crossing}
Two selected segments whose directions form a basis of $L$ meet in a lattice point. Moreover, if three selected segments have pairwise-unimodular directions and their images modulo $rL$ have a common point, then the three segments meet at one lattice point.
\end{lemma}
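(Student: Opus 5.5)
The plan is to prove the two assertions separately. The first is a physical statement about $P$, which I would derive from the absence of chords of lattice length $r+1$. The second reduces to the first together with the fiber structure of Section~\ref{sec:lattice-chords}.

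\emph{Crossing.} Let $Q=[x,x+ru]$ and $Q'=[y,y+rv]$ be selected segments with $u,v$ a basis of $L$. Work in the coordinates given by $u,v$. Then $Q=[a,a+r]\times\{b\}$ and $Q'=\{c\}\times[d,d+r]$ with $a,b,c,d\in\Z$. The supporting lines meet at the lattice point $(c,b)$. So it suffices to show $a\le c\le a+r$ and $d\le b\le d+r$.

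Suppose first that exactly one of these fails, say $c>a+r$ while $b\in[d,d+r]$. The horizontal line $y=b$ meets $\conv(Q\cup Q')\subseteq P$ in $[a,c]\times\{b\}$, which has lattice length $c-a\ge r+1$. This chord has lattice endpoints $(a,b)$ and $(c,b)$, so it gives two points of $A$ differing by $(r+1)u$, contradicting injectivity modulo $sL$.

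The main obstacle is the case where both conditions fail, say $c>a+r$ and $b<d$ after the symmetries $u\mapsto -u$, $v\mapsto -v$. Here the lines meet outside $P$. I would look for a long chord in the other primitive direction of the lattice triangle with vertices $(a,b)$, $(c,b')$, $(c,d+r)$, namely the direction $u+v$ (or $u-v$). The vertices of $\conv(Q\cup Q')$ are $(a,b),(a+r,b),(c,d),(c,d+r)$. Both coordinate spans, $c-a$ and $d+r-b$, are at least $r+1$. The diagonal through $(a+r,b)$ and the diagonal through $(c,d)$ bound a strip, and the section of the quadrilateral along a diagonal line should have lattice length at least $r+1$. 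I would verify this by the same ceiling estimate used in Lemma~\ref{lem:determinant-two}. As a fallback, I would combine it with the box bound $m_i\le 2r-1$ of Lemma~\ref{lem:lattice-box}, transported to the basis $u,v$.

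\emph{Concurrence.} Let $Q_1,Q_2,Q_3$ have pairwise unimodular directions. Two lines of the torus $L/rL$ in unimodular directions meet in exactly one point. Hence if the three images share a point $\omega$, every pairwise intersection $p_{ij}=Q_i\cap Q_j$ from the first part satisfies $p_{ij}\equiv\omega\pmod{rL}$. Now $p_{12},p_{13}\in Q_1$, and the lattice points of $Q_1$ are pairwise incongruent modulo $rL$ except for its two endpoints. So either $p_{12}=p_{13}$, which gives the required common point, or $\{p_{12},p_{13}\}$ are the endpoints of $Q_1$. Applying the same argument to $Q_2$ and $Q_3$, the only remaining configuration is that $p_{12},p_{13},p_{23}$ are three distinct points, each an endpoint of the two segments through it. They then form a triangle whose sides are exactly $Q_1,Q_2,Q_3$.

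In that configuration the three vertices lie in a single fiber modulo $rL$, since they are pairwise congruent. By the selection rule, a three-point fiber contributes only its segments parallel to $e_1$ and to $e_2$. Among three pairwise unimodular classes from \eqref{eq:four-directions}, one is diagonal, and diagonal selected segments come only from two-point fibers. Its endpoints, however, lie in this three-point fiber. This contradiction finishes the proof.
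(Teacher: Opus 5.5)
\textbf{There is a genuine gap in the crossing assertion.} It sits in the case you yourself flag as the main obstacle, where $c\notin[a,a+r]$ and $b\notin[d,d+r]$. You do not prove this case. You only assert that a section of the quadrilateral along a line of slope $\pm1$ ``should'' have lattice length at least $r+1$, and for $\conv(Q\cup Q')$ alone this is false.

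Take $Q=[0,r]\times\{0\}$ and $Q'=\{r+1\}\times[N,N+r]$ with $N\geq r+2$. The hull has $x$-span exactly $r+1$, and its section at $x=0$ is the single point $(0,0)$. Neither $(r+1,r+1)$ nor $(r+1,-(r+1))$ lies in it. So no line of slope $\pm1$ meets it in a chord of lattice length $r+1$; the long chord there is vertical, along $x=r$.

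The box-bound fallback does dispose of this particular example. A case analysis over the unimodular pairs among $e_1,e_2,e_1\pm e_2$ and the sign normalizations might rescue your approach, but you have not carried it out. So the first assertion is not established. The second assertion is not established either, since it uses the intersection points $p_{ij}$ from the first.

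\textbf{The missing idea is to work modulo $sL$ rather than hunt for long chords.} $Q\cap L=\{x,x+u,\dots,x+ru\}$ has $s=r+1$ points. They are pairwise incongruent modulo $sL$ because $u$ is primitive, so their image is a complete line of $L/sL\cong(\Z/s\Z)^2$; likewise for $Q'$. Two lines in directions $u,v$ forming a basis meet in exactly one class, and each segment has a lattice point in that class. Injectivity modulo $sL$ then forces these two points to be equal. This works no matter where the supporting lines cross, so no case split is needed; your ``exactly one fails'' case is a special instance of it.

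\textbf{Your concurrence argument is correct and follows the paper's.} It reduces to the configuration where $p_{12},p_{13},p_{23}$ are distinct endpoints in one fiber modulo $rL$. Your final contradiction, that a diagonal segment is selected only from a two-point fiber, is valid given the standing exclusion of four-point fibers. The paper's observation that at most two segments are selected from any fiber is more direct.
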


\begin{proof}
Modulo $sL$, a primitive vector has order $s$, so the reduction of $Q\cap L$ is a complete affine line of $s$ points. The two affine lines in these basis directions meet in exactly one class modulo $sL$. Each of the two segments contains a representative of that class, and injectivity modulo $sL$ forces the two representatives to coincide.

For the second assertion, let $p_{ij}$ be the intersection of the $i$th and $j$th segments. Pairwise unimodularity implies that all $p_{ij}$ lie in the common class modulo $rL$. If they are not all equal, they are pairwise distinct: equality of two puts one point on all three segments. On each segment, write the difference of its two intersection points as $ku$, where $u$ is its primitive direction and $0<|k|\leq r$. Their congruence modulo $rL$ gives $ku\in rL$. Since $u$ extends to a basis of $L$, this forces $r\mid k$, so $|k|=r$ and the intersection points are the two endpoints. All three segments would therefore have their endpoints in one $rL$-fiber, whereas at most two segments were selected from any fiber.
\end{proof}
\section{Covering and counting on the finite torus}\label{sec:finite-torus}

For the geometric argument below, let $r\geq3$ be an integer. Sections~\ref{sec:terminal} and~\ref{sec:lattice-chords} handle four-point fibers, parallel segments with the same image modulo $rL$, and determinant-two pairs: each forces all classes modulo $rL$ to be represented. In the remaining case, the selected segments have distinct images modulo $rL$ and pairwise-unimodular direction classes. The two diagonal directions in \eqref{eq:four-directions} cannot both occur. Replacing $e_2$ by $-e_2$ if necessary, we may assume that the occurring directions are among
$$e_1,\qquad e_2,\qquad e_1+e_2,$$
up to sign. Identify $L$ with $\Z^2$ and $\lin_{\R}L$ with $\R^2$ using the basis $e_1,e_2$. We use these coordinates both for the segments and for their residue classes. Thus $L/rL$ is identified with
$$G_r:=(\Z/r\Z)^2.$$
Write $\mathcal H,\mathcal V,\mathcal D$ for the three families of lines in $G_r$ given respectively by
$$y=a,\qquad x=b,\qquad x-y=c \qquad(a,b,c\in\Z/r\Z).$$
Each family contains $r$ disjoint lines of $r$ points, and every point lies on one line from each class. If $\mathcal S\subseteq\mathcal H\cup\mathcal V\cup\mathcal D$, put
$$U(\mathcal S):=\bigcup_{\ell\in\mathcal S}\ell.$$
The transformations
$$(x,y)\longmapsto(y,x),\qquad (x,y)\longmapsto(x,x-y)$$
generate every permutation of the three unoriented direction classes, up to a sign change of a line parameter. These integer linear maps preserve lattice lengths, congruences, and intersections.

The main geometric step is to complete the images of the selected segments to a cover with $2r+1$ lines. The construction uses the intersection and concurrence properties established in Lemma~\ref{lem:physical-crossing}.

\begin{proposition}[Completion to a cover]\label{prop:completion}
Let $r\geq3$ be an integer, and let $\mathcal Q$ be a family of segments selected as in Section~\ref{sec:lattice-chords}. Suppose that it uses at most three direction classes and that every pair of distinct classes is unimodular. Put
$$\mathcal S:=\{\ell_r(Q):Q\in\mathcal Q\}.$$
Assume that distinct parallel segments have distinct images modulo $rL$. Then there is a family $\mathcal C$ of exactly $2r+1$ distinct lines from a pairwise-unimodular triple extending the direction classes occurring in $\mathcal Q$ such that
$$\mathcal S\subseteq\mathcal C, \qquad U(\mathcal C)=G_r.$$
\end{proposition}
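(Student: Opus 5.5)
The plan is to build $\mathcal C$ in the simplest possible shape: one full parallel class together with all of $\mathcal S$, padded with arbitrary extra lines. Any $r$ parallel lines of one class already cover $G_r$. So if for some class $\mathcal X\in\{\mathcal H,\mathcal V,\mathcal D\}$ we have
$$|\mathcal S\setminus\mathcal X|\leq r+1,$$
we put $\mathcal C:=\mathcal X\cup\mathcal S\cup\mathcal E$. Here $\mathcal E$ is any set of further lines from the other two classes of the triple, chosen so that $|\mathcal C|=2r+1$. This is possible because those two classes contain $2r\geq r+1$ lines. Then $U(\mathcal C)=G_r$ and $\mathcal S\subseteq\mathcal C$ hold automatically. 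If some class of the triple does not occur in $\mathcal Q$, we first extend the occurring classes to the pairwise-unimodular triple $\mathcal H,\mathcal V,\mathcal D$ using the transformations listed before the proposition. If all of $\mathcal S$ lies in two classes, we simply take $\mathcal X$ to be one of them.

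Write $h,v,d$ for the numbers of lines of $\mathcal S$ in $\mathcal H,\mathcal V,\mathcal D$. These are distinct lines, by the hypothesis on parallel segments. The core of the proof is the inequality
$$\min(h+v,\;h+d,\;v+d)\leq r+1,$$
and I would prove it from the physical geometry of Section~\ref{sec:lattice-chords}.

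By Lemma~\ref{lem:physical-crossing}, each horizontal selected segment meets each vertical one in a lattice point of $A$. Hence if $v\geq1$, all horizontal segments lie at integer heights inside the $r+1$ levels of a single vertical segment. Since their images are distinct, their heights are distinct modulo $r$. The same holds with the roles of the classes exchanged. Thus, when two classes both occur, the crossings of $h$ horizontal and $v$ vertical segments form an $h\times v$ grid of lattice points of $A$. Each such segment has $r+1$ lattice points, all in $A$, and $A$ is injective modulo $sL$. I would count points of $A$ in residue classes modulo $sL$ along these segments and compare with $|A|\leq s^2$. Where that count is not strong enough, I would also use the box bound $m_i\leq 2r-1$ from Lemma~\ref{lem:lattice-box}.

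The second part of Lemma~\ref{lem:physical-crossing} supplies the rigidity needed when all three classes occur. If a horizontal, a vertical and a diagonal line of $\mathcal S$ pass through a common point of $G_r$, the three segments are physically concurrent. In the physical plane, the diagonal segments then pass through crossing points of the grid. This should force the diagonals to be few relative to $h$ and $v$, or to repeat the horizontal and vertical information, so that one of the three pairwise sums is at most $r+1$.

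I expect this counting inequality to be the main obstacle. My first attempt would be by contradiction: assume all three pairwise sums are at least $r+2$. Pigeonhole then forces many triple concurrences in $G_r$, and these contradict the at-most-two-segments-per-fiber constraint used at the end of the proof of Lemma~\ref{lem:physical-crossing}.

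If the inequality cannot be obtained directly, I would fall back on more flexible covers. Take $a$ horizontal lines and $b$ vertical lines containing those of $\mathcal S$. The uncovered set is then an $(r-a)\times(r-b)$ grid, and it is covered by the diagonals $x-y=c$ with $c$ ranging over the difference set of the missing coordinates. I would choose $a,b$ to make this total at most $2r+1$ while keeping $\mathcal S\cap\mathcal D$ inside, and then pad with extra lines to exactly $2r+1$. The physical constraints above would again control the size of the required difference set.
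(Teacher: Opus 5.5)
\textbf{The gap.} Your whole construction rests on the inequality $\min(h+v,h+d,v+d)\le r+1$. You do not prove it, and it is false under the hypotheses of the proposition. Take $r=6$, $L=\Z^2$ and
\[
A=\{(x,y)\in\Z^2:\ 0\le 2x-y\le 13,\ 0\le 2y-x\le 13,\ 6\le x+y\le 19\}.
\]
The verification runs as follows.
\begin{itemize}
\item The forms $2x-y$, $2y-x$, $x+y$ each have width $13$ on $A$. Hence two points of $A$ never differ by $7u$ with $u\ne0$, so $A$ is injective modulo $7L$.
\item Two points can differ by $6u$ only for $u\in\{\pm e_1,\pm e_2,\pm(e_1+e_2)\}$. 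So no four-point fiber occurs.
\item Since $A\subseteq[2,10]^2$, the standard basis is an admissible box basis.
\item Segments of lattice length $6$ occur exactly on the rows $y=4,\dots,8$, the columns $x=4,\dots,8$ and the diagonals $x-y=-2,\dots,2$.
\item The fibers $\{(4,2),(4,8),(10,8)\}$ and $\{(2,4),(8,4),(8,10)\}$ have three points, so the diagonals $x-y=\pm2$ are not selected. Every other fiber has at most two points.
\end{itemize}
Thus $h=v=5$ and $d=3$, all images modulo $6L$ are distinct, and $\min(h+v,h+d,v+d)=8=r+2$.

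Since $|\mathcal S|=13=2r+1$, the only admissible $\mathcal C$ is $\mathcal S$ itself. A full parallel class plus $\mathcal S$ already has at least $14$ lines. In this example every class modulo $6L$ is represented, but the proposition does not exclude this, and your argument uses no such exclusion.

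Your intended contradiction also cannot work in principle. The second part of Lemma~\ref{lem:physical-crossing} does not forbid concurrences; it turns them into physical concurrences, which are harmless. In the example, the lines $y=4$, $x=4$, $x-y=0$ concur in $G_6$ because the three segments meet at $(4,4)$. The lemma only rules out the degenerate case where the three pairwise crossings are distinct endpoints.

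\textbf{What the paper does instead.} The regime where all three pairwise sums are at least $r+2$ therefore needs a cover containing no full class. Your fallback gestures at this but does not supply it. The paper's proof works with the integer parameter sets $Y,X,W$ of the physical lines, with spans $a,b,c$.
\begin{itemize}
\item A selected segment contains its crossings with the extremal selected segments of the other classes, and equality of parameters at distance $r$ would repeat an image. This gives $a,b,c\le r-1$. The pairwise-sum assumption gives $a+b,a+c,b+c\ge r$.
\item After a unimodular normalization, $Y\subseteq[0,a]$, $X\subseteq[d,d+b]$, $W\subseteq[0,c]$ with $0\le d\le a$.
\item The extremal horizontal segments give $d+b\le r$ and $a+c-d\le r$. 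Equality is excluded by the concurrence lemma, since in that case three segments would have concurrent images whose pairwise crossings are distinct endpoints.
\item With $d+b\le r-1$ and $a+c-d\le r-1$, the sets $Y^*=[0,a]$, $X^*=[d,r-1]$, $W^*=[0,r-1-a+d]$ give exactly $2r+1$ lines containing $\mathcal S$.
\item A point missed by the first two families has $x-y\bmod r\in[1,r+d-a-2]\subseteq W^*$, so these lines cover $G_r$.
\end{itemize}

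Your fallback needs exactly this choice: interval families of horizontals and verticals such that the selected diagonals fit into the resulting difference interval plus two spare lines. Without the two strict span inequalities derived from Lemma~\ref{lem:physical-crossing}, there is no reason such a choice exists. That step is the entire content of the hard case.
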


\begin{proof}
If fewer than three direction classes occur, extend them to a pairwise-unimodular triple; this fixes three ambient classes but adds no selected line. Use the common lattice coordinates described above. Let $u,v,w$ be the numbers of selected horizontal, vertical, and diagonal lines, respectively; some of these numbers may be zero. If, say, $u+v\leq r+1$, include all $r$ lines of the third class and add $r+1-u-v$ unused lines from the first two classes. There are $2r-u-v$ such unused lines, so this gives a cover with exactly $2r+1$ lines.

We may therefore suppose that every pairwise sum among $u,v,w$ is at least $r+2$. In particular, all three selected families are nonempty. Let $Y,X,W\subseteq\Z$ be the integer parameters of the lines containing the selected segments: $y\in Y$, $x\in X$, and $x-y\in W$, respectively. These are their parameters in $\R^2$, before reduction modulo $r$. Put
$$a=\max Y-\min Y,\qquad b=\max X-\min X,\qquad c=\max W-\min W.$$
Each span is at most $r$. For example, any selected vertical segment meets the selected horizontal segments at the smallest and largest parameters by Lemma~\ref{lem:physical-crossing}, and both intersection points lie on that segment of lattice length $r$. Equality would give two parallel segments with the same image modulo $rL$, so
\begin{equation}\label{eq:spans-upper}
    a,b,c\leq r-1.
\end{equation}
Since an integer parameter set of size $u$ has span at least $u-1$, the pairwise-sum assumption gives
\begin{equation}\label{eq:spans-lower}
    a+b\geq r,\qquad a+c\geq r,\qquad b+c\geq r.
\end{equation}

Use the three-class symmetry to make $a$ the smallest span. Apply the translation $(x,y)\mapsto(x-\min Y-\min W,y-\min Y)$. Denote the translated parameter sets again by $Y,X,W$ and put $d=\min X$. Then
\begin{equation}\label{eq:first-chamber}
    Y\subseteq[0,a],\qquad X\subseteq[d,d+b],\qquad W\subseteq[0,c].
\end{equation}
Here and below, intervals denote sets of integers; their residues index torus lines. An interval with upper endpoint below its lower endpoint is empty.

If $d>c$, the selected horizontal segment at level $0$ contains its intersections with the lines $x-y=0$ and $x=d+b$, and therefore $d+b\leq r$. Together with $a+b\geq r$, this gives $d\leq a\leq c$, a contradiction. If $d<a-b$, the selected horizontal segment at level $a$ contains its intersections with $x=d$ and $x-y=c$, so $a+c-d\leq r$; the inequality $b+c\geq r$ then yields $d\geq a-b$, again a contradiction. Hence
\begin{equation}\label{eq:chamber-rough}
    a-b\leq d\leq c.
\end{equation}

We next arrange $0\leq d\leq a$. Since $a\leq r-1$, the intervals in \eqref{eq:critical-cover-intervals} will then lie in $[0,r-1]$.

If $d\geq0$, apply $(x,y)\mapsto(x,x-y)$. The new parameter sets are
$$Y'=W,\qquad X'=X,\qquad W'=Y.$$
Their spans are $c,b,a$, respectively, and $\min X'=d$. By \eqref{eq:chamber-rough}, $0\leq d\leq c$.

If $d<0$, apply $(x,y)\mapsto(-y,-x)$ and translate both coordinates by $d+b$. Then
$$Y'\subseteq[0,b],\quad X'\subseteq[d+b-a,d+b],\quad W'\subseteq[0,c].$$
The new horizontal, vertical, and diagonal spans are $b,a,c$, respectively, and $\min X'=d+b-a$. Since $d<0$ and \eqref{eq:chamber-rough} gives $d\geq a-b$, we have $0\leq d+b-a\leq b$. After relabeling in either case we have
\begin{equation}\label{eq:normalized-chamber}
    Y\subseteq[0,a],\quad X\subseteq[d,d+b],\quad W\subseteq[0,c],\quad 0\leq d\leq a.
\end{equation}

The extremal horizontal segments at levels $0$ and $a$ contain their intersections with the extremal vertical and diagonal segments. Hence
\begin{equation}\label{eq:chord-span-nonstrict}
    d+b\leq r,\qquad a+c-d\leq r.
\end{equation}
Both inequalities are strict. If $d+b=r$, the selected segments on $y=0$, $x=r$, and $x-y=0$ have pairwise intersections $(0,0),(r,0),(r,r)$. Their images modulo $rL$ concur, but the segments do not, contradicting Lemma~\ref{lem:physical-crossing}. If $a+c-d=r$, use the selected segments on $y=a$, $x=d$, and $x-y=c$: their pairwise intersections are $(d,a),(a+c,a),(d,d-c)$, and the same contradiction applies. Therefore
\begin{equation}\label{eq:chord-span-strict}
    d+b\leq r-1,\qquad a+c-d\leq r-1.
\end{equation}

Set
\begin{equation}\label{eq:critical-cover-intervals}
    Y^*=[0,a],\qquad X^*=[d,r-1],\qquad W^*=[0,r-1-a+d].
\end{equation}
These intervals lie in $[0,r-1]$, contain $Y,X,W$ by \eqref{eq:chord-span-strict}, and have total cardinality $(a+1)+(r-d)+(r-a+d)=2r+1$. The corresponding lines cover $G_r$: a point on neither a $Y^*$-line nor an $X^*$-line has
$$y\in[a+1,r-1],\qquad x\in[0,d-1],$$
and thus
$$x-y\pmod r\in[1,r+d-a-2]\subseteq W^*.$$
When one of these displayed intervals is empty, the implication is vacuous. The corresponding torus lines form the required $\mathcal C$. Pulling them back through the affine-unimodular normalizations completes the proof.
\end{proof}

It remains to estimate how many points the selected lines cover. The next lemma applies to every subfamily of a cover of the size just constructed.
\begin{lemma}[Counting in a cover]
\label{lem:critical-cover-expansion}
Let $r\geq3$, and let $\mathcal C$ be a cover of $G_r$ by exactly $2r+1$ distinct lines from $\mathcal H\cup\mathcal V\cup\mathcal D$. For every $\mathcal S\subseteq\mathcal C$,
\begin{equation}\label{eq:critical-cover-expansion}
    |U(\mathcal S)|\geq\frac{r^2|\mathcal S|}{2r+1}.
\end{equation}
\end{lemma}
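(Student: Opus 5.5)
The plan is to reduce \eqref{eq:critical-cover-expansion} to an inequality in a few integer parameters and then check it by cases. Let $h,v,\delta$ be the numbers of horizontal, vertical and diagonal lines of $\mathcal C$, so $h+v+\delta=2r+1$. Let $s_1,s_2,s_3$ be the corresponding numbers for $\mathcal S$, and put $n=s_1+s_2+s_3=|\mathcal S|$. Two lines from different classes meet in exactly one point, and parallel lines are disjoint. Inclusion--exclusion over the at most three lines through a point therefore gives
\begin{equation*}
  |U(\mathcal S)| = rn-(s_1s_2+s_1s_3+s_2s_3)+\tau(\mathcal S),
\end{equation*}
where $\tau(\mathcal S)\geq0$ is the number of points lying on a line of $\mathcal S$ from each of the three classes. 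The first step is to record this identity. The second is to record one global fact about $\mathcal C$. Let $k(p)$ be the number of lines of $\mathcal C$ through $p$. Double counting incidences gives $\sum_p k(p)=r(2r+1)$ over $r^2$ points, each with $1\leq k(p)\leq3$. Hence (number of points with $k=3$) $-$ (number of points with $k=1$) $=r$, so $\mathcal C$ has at least $r$ triple points.

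For small $n$ I will use the crude bound $|U(\mathcal S)|\geq rn-\sum_{i<j}s_is_j$. For fixed $n$, the pairwise sum is largest in the balanced case, where it is at most $n^2/3$. The target $r^2n/(2r+1)$ then follows whenever $r-n/3\geq r^2/(2r+1)$, that is, $n\leq 3r(r+1)/(2r+1)$, which is roughly $3r/2$. For large $n$ I will argue through complements. A point outside $U(\mathcal S)$ lies only on lines of $\mathcal C':=\mathcal C\setminus\mathcal S$, and $|\mathcal C'|=2r+1-n$ is small. I will bound the points covered only by $\mathcal C'$ by $r|\mathcal C'|$ minus a correction. The correction counts points of a $\mathcal C'$-line that also lie on a line of $\mathcal S$. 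Since $\mathcal S$ contains most of $\mathcal C$, every line of $\mathcal C'$ meets many lines of $\mathcal S$ in distinct points. This should give $|G_r\setminus U(\mathcal S)|\leq r^2(2r+1-n)/(2r+1)$, which is equivalent to the claim.

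The main obstacle is the middle range of $n$, and in particular unbalanced class distributions. An extreme example is $\mathcal S$ consisting of lines of one class only: then $|U(\mathcal S)|=rn$ and the bound is trivial. The dangerous configurations are those with many triple points inside $U(\mathcal S)$ from a single class. Here I would first try to use the covering hypothesis structurally. If the lines of $\mathcal C$ miss $r-h'$ horizontal and $r-v'$ vertical levels, the uncovered rectangle $Y^c\times X^c$ must be covered by diagonals. Hence $\delta\geq|X^c-Y^c|$ in $\Z/r\Z$, and I would bound this difference set from below by a Kneser-type estimate. This forces $h+v+\delta=2r+1$ to be distributed so that no class of $\mathcal C$ is too small compared with the product of the complements of the other two. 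In particular, $s_1s_2+s_1s_3+s_2s_3$ cannot be large while $\tau(\mathcal S)$ stays small.

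Concretely, I would minimize $rn-\sum s_is_j+\tau$ subject to $s_1\leq h$, $s_2\leq v$, $s_3\leq\delta$ and the covering constraint. I would show that the minimum of $|U(\mathcal S)|/n$ is attained either at $\mathcal S=\mathcal C$, where the ratio is exactly $r^2/(2r+1)$, or at a single line, where it is $r$. A convexity or exchange argument in $(s_1,s_2,s_3)$ should interpolate between these endpoints. If that interpolation fails, the fallback is a direct case split on which class is largest in $\mathcal C$. In that split, the triple points of $\mathcal C$ would be distributed among the lines via the identity above.
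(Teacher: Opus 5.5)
Your small-$n$ step is exactly the paper's first case and is correct. The gap is the large-$n$ step, which you leave at ``this should give''. Suppose the correction for a line $\ell\in\mathcal C'$ counts only its intersections with lines of $\mathcal S$. Then a horizontal $\ell$ is guaranteed only $\max(s_2,s_3)$ covered points, because the vertical and diagonal lines of $\mathcal S$ may meet $\ell$ in the same points. This is too weak. Take $r=100$. Let $\mathcal C$ consist of all horizontal lines together with $50$ vertical and $51$ diagonal lines. Let $\mathcal S$ consist of $61$ horizontal, $45$ vertical and $45$ diagonal lines of $\mathcal C$. Then $n=151$ exceeds your threshold $3r(r+1)/(2r+1)=30300/201$. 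The per-line bound allows up to $39\cdot55+11\cdot39=2574$ uncovered points, whereas the target is $r^2(2r+1-n)/(2r+1)=500000/201<2488$. What your sketch lacks is a choice of the order in which the lines of $\mathcal C'$ are counted. It also never uses the fact that exactly $3r-(2r+1)=r-1$ lines lie outside $\mathcal C$.

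The paper supplies both. It restores the lines of $\mathcal C\setminus\mathcal S$ one at a time to a growing family $\mathcal T\supseteq\mathcal S$. Let $a,b,c$ count the horizontal, vertical and diagonal lines outside $\mathcal T$, so that $a+b+c=3r-|\mathcal T|<3r/2$. A restored horizontal line adds at most $\min\{b,c\}$ new points, since a new point needs both its vertical and its diagonal line outside $\mathcal T$. The other classes are analogous.
\begin{itemize}
\item If all remaining lines lie in one class, the outside lines of the other two classes are outside $\mathcal C$. Their number is at most $r-1$, so the relevant minimum is at most $M:=\lfloor(r-1)/2\rfloor$.
\item If two classes remain and every remaining line could add at least $M+1$ points, then $a,b,c\geq M+1$. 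This contradicts $a+b+c<3(M+1)$.
\end{itemize}
Hence some line adds at most $M$ points at every step. Since $\mathcal C$ covers $G_r$, this gives $r^2-|U(\mathcal S)|\leq(2r+1-n)M$, with $M<r^2/(2r+1)$.

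This argument works for every $n>3r/2$, so it overlaps your small-$n$ range and there is no middle range. The Kneser-type estimate, the constrained minimization and the exchange argument are therefore unnecessary, and as written they are not actual steps. Your count of triple points is correct but plays no role.
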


\begin{proof}
Put $m=|\mathcal S|$, and let $u,v,w$ be its three direction counts. If $m\leq3r/2$, inclusion--exclusion gives
$$|U(\mathcal S)| \geq rm-(uv+uw+vw) \geq rm-\frac{m^2}{3} \geq\frac{rm}{2} \geq\frac{r^2m}{2r+1}.$$

Suppose that $m>3r/2$, and put $M=\lfloor(r-1)/2\rfloor$. We restore the lines of $\mathcal C\setminus\mathcal S$ one at a time. For a current family $\mathcal S\subseteq\mathcal T\subsetneq\mathcal C$, let $a,b,c$ count the lines outside $\mathcal T$ in the three full direction classes. Thus
$$a+b+c=3r-|\mathcal T|<\frac{3r}{2}.$$
Adding a horizontal line of $\mathcal C\setminus\mathcal T$ covers at most $\min\{b,c\}$ new points: both the vertical and diagonal line of a new point must lie outside $\mathcal T$, and each meets the horizontal line once. The other directions have analogous bounds.

If all remaining lines have one direction, the missing lines in the other two classes lie outside $\mathcal C$. Their total number is at most $3r-(2r+1)=r-1$, so a remaining line covers at most $M$ new points. Otherwise, if every remaining line covered at least $M+1$ new points, the bounds from two different directions would imply $a,b,c\geq M+1$. This contradicts
$$a+b+c<\frac{3r}{2}\leq3(M+1).$$
Hence a line adding at most $M$ new points can always be restored. The condition $|\mathcal T|>3r/2$ is preserved throughout. Since the final family covers $G_r$,
$$r^2-|U(\mathcal S)| \leq(2r+1-m)M \leq\frac{r^2(2r+1-m)}{2r+1},$$
where $M\leq(r-1)/2<r^2/(2r+1)$. Rearranging proves the claim.
\end{proof}

\begin{proof}[Proof of Theorem~\ref{thm:main}]
By Section~\ref{sec:reductions}, it suffices to compare moduli $r$ and $s=r+1$ for $r\geq3$ and a nonempty finite lattice-convex set $A$ in a rank-two lattice $L$. By Lemma~\ref{lem:terminal-peeling}, we may assume that reduction modulo $sL$ is injective on $A$. If $A$ has affine dimension at most one, use Lemma~\ref{lem:line-formula}. Otherwise put $P=\conv(A)$ and $t=|A|-\Ncount{r}{A}$. Since $\Ncount{s}{A}=|A|$, the desired comparison is equivalent to
\begin{equation}\label{eq:assembly-target}
    \Ncount{r}{A}\geq\frac{r^2t}{2r+1}.
\end{equation}
If $\Ncount{r}{A}=r^2$, the comparison follows directly from $\Ncount{s}{A}=|A|\leq s^2$. We may therefore assume $\Ncount{r}{A}<r^2$.

Use the coordinates of Corollary~\ref{cor:four-block}. No fiber has four points, since that would give $\Ncount{r}{A}=r^2$. Select $t$ segments as in Section~\ref{sec:lattice-chords}. Lemmas~\ref{lem:repeated-support} and~\ref{lem:determinant-two} exclude equal images of parallel segments and determinant-two pairs. Thus the selected direction classes are pairwise unimodular. Images in different classes are also distinct, since they meet in exactly one point and each has $r>1$ points. Their family $\mathcal S$ therefore has $t$ lines.

Proposition~\ref{prop:completion} gives a cover with $2r+1$ lines containing $\mathcal S$. Lemma~\ref{lem:critical-cover-expansion} gives
$$\Ncount{r}{A}\geq |U(\mathcal S)| \geq\frac{r^2t}{2r+1},$$
where the first inequality holds because every lattice point on a selected segment belongs to $A$. This proves \eqref{eq:assembly-target}.

Lemma~\ref{lem:terminal-peeling} transfers the comparison to the original set. Chaining consecutive comparisons and then summing over the affine cosets in \eqref{eq:affine-coset-sum} proves the theorem.
\end{proof}

\begin{remark}\label{rem:equality}
In Lemma~\ref{lem:critical-cover-expansion}, the last comparison in each case is strict when $\varnothing\ne\mathcal S\subsetneq\mathcal C$. Hence equality in \eqref{eq:critical-cover-expansion} holds precisely for $\mathcal S=\varnothing$ or $\mathcal S=\mathcal C$.

Under the assumptions of Theorem~\ref{thm:main}, put $i=\rank\Gamma$ and $c=\Dcount{\Lambda}{K}{\Gamma}$. For integers $1\leq r<s$,
$$\frac{\Dcount{\Lambda}{K}{r\Gamma}}{r^i} =\frac{\Dcount{\Lambda}{K}{s\Gamma}}{s^i} \quad\Longleftrightarrow\quad \Dcount{\Lambda}{K}{s\Gamma}=s^ic.$$
Thus equal values occur only at the maximum $c$; once the normalized counts fall below $c$, they decrease strictly.

If $c=0$ or $i=0$, the assertion is immediate. Otherwise, Lemma~\ref{lem:affine-cosets} expresses the difference of the normalized counts as a sum of nonnegative differences over the nonempty slices $A_\xi$. Equality forces equality on every slice, and monotonicity then gives
$$s^i\Ncount{s-1}{A_\xi;\Gamma} =(s-1)^i\Ncount{s}{A_\xi;\Gamma}.$$
Since $\gcd(s^i,(s-1)^i)=1$ and $1\leq\Ncount{s}{A_\xi;\Gamma}\leq s^i$, we obtain $\Ncount{s}{A_\xi;\Gamma}=s^i$. Summing over the $c$ nonempty slices proves the forward implication. Conversely, monotonicity and the value $c$ at modulus one give $c\geq\Dcount{\Lambda}{K}{r\Gamma}/r^i \geq\Dcount{\Lambda}{K}{s\Gamma}/s^i=c$.
\end{remark}

\end{document}